\documentclass[12pt,a4paper,oneside,reqno,notitlepage]{amsart}
\usepackage{amssymb}

\topmargin=-.5cm\textheight=23.cm\textwidth=16.cm
\oddsidemargin=-0.25cm \evensidemargin=-0.25cm
\usepackage{amsfonts}
\usepackage{amscd,amsmath}
\usepackage{euscript}
\usepackage{comment}

\usepackage[arrow,matrix,curve]{xy}\SilentMatrices
\def\xyma{\xymatrix@M.7em}

\numberwithin{equation}{section}

\usepackage[T2A]{fontenc}
\newtheorem{cor}{Corollary}[section]

\newtheorem{prop}{Proposition}[section]
\newtheorem{theorem}{Theorem}[section]
\newtheorem{lemma}{Lemma}[section]
\newtheorem{remark}{Remark}[section]

\def\bee{\begin{equation}}
\def\ee{\end{equation}}

\def\lotimes{\buildrel{L}\over\otimes}
\begin{document}
\title{On the splitting of polynomial functors}
\author{Roman Mikhailov}

\address{Steklov Mathematical Institute, Gubkina 8, 119991 Moscow, Russia and Institute for Advanced Study, Princeton, NJ, USA}
\email{romanvm@mi.ras.ru}
\urladdr{http://www.mi.ras.ru/\~{}romanvm/pub.html}

\begin{abstract}
We develop methods for proving that certain extensions of
polynomial functors do not split naturally. As an application we
give a functorial description of the third and the fourth stable
homotopy groups of the classifying spaces of free abelian groups.
\end{abstract}

\thanks{This material is based upon work
supported by the National Science Foundation under agreement No.
DMS-0635607. Any opinions, findings and conclusions or
recommendations expressed in this material are those of the
authors and do not necessarily reflect the views of the National
Science Foundation.} \maketitle

\section{Introduction}
\vspace{.5cm}

There are different important spaces whose homotopy type depends
naturally on an abelian group $A$. For example, Eilenberg-MacLane
spaces $K(A,n),\ n\geq 1$, suspensions  $\Sigma^mK(A,n)$ of these,
$m\geq 1$, wedges $K(A,n)\vee K(A,m)$, etc... The homology and
homotopy groups of these spaces can be viewed as functors on the
category of abelian groups. The problem of describing these as
functors is more difficult than an abstract description. Indeed,
an abstract description of homology groups $H_n(A)$ of abelian
groups is simple, as it follows from the K\"unneth formula,
whereas the functors $A\mapsto H_n(A)$ are complicated
\cite{Breen}. This paper continues the research started in
\cite{Breen}, \cite{BreenMikhailov}. It is standard that the
functorial description of different homological or homotopical
functors follows from certain spectral sequences. As a rule, the
result of the convergence of a spectral sequence gives not a
functor as a whole, but only a filtration on it. To solve the
extension problem, i.e. to glue the real functor from different
pieces, one needs methods for the control of functorial
extensions. Such methods are developed in this paper. Note that
all the functors considered in this paper are over $\mathbb Z$,
they are defined on the category of (free) abelian groups; observe
that the analogous results over fields can be obtained more
easily.

To give some examples which illustrate the spirit of questions
considered in this paper, let us start with two complexes of
abelian groups $C_*, D_*$, one can compute the homology of their
tensor product $H(C_*\otimes D_*)$ in terms of $H(C_*)$ and
$H(D_*)$ using the well-known K\"unneth formula. Now consider
three abelian groups $A,B,C$. The K\"unneth formula gives the
following exact sequence
\begin{equation}\label{maclane}
0\to Tor(A,B)\otimes C\to H_1\left(A\lotimes B\lotimes C\right)\to
Tor(A\otimes B,C)\to 0
\end{equation}
which splits as a sequence of abelian groups. The middle term of
this sequence is the functor $Trip(A,B,C)$ of MacLane
\cite{MacLane} which is simply the first homology group of the
iterated tensor product in the derived category. S. MacLane proved
in \cite{MacLane} that the sequence (\ref{maclane}) does not split
naturally as a sequence of multi-functors. On the other hand, let
us fix the two groups $B=C=\mathbb Z/2$. In this case the sequence
(\ref{maclane}) has the form
$$
0\to Tor(A,\mathbb Z/2) \to H_1\left(A\lotimes \mathbb Z/2\lotimes
\mathbb Z/2\right)\to  A\otimes \mathbb Z/2\to 0,
$$
and this sequence splits as a sequence of functors. This simply
follows from the fact that we can choose a splitting $A\lotimes
\mathbb Z/2\lotimes \mathbb Z/2\simeq A\lotimes(\mathbb Z/2\oplus
\mathbb Z/2[1])$ in the derived category functorially in
$A$\footnote{We will always use the traditional notation $[n]$ for
the shift of degree $n$ in the derived category.}.

One more example is the following. Let $A$ be an abelian group. A
description of the third homology of $A$ as a functor is given in
\cite{Breen}. There is the following natural exact sequence
\begin{equation}\label{seqh3}
0\to \Lambda^3(A)\to H_3(A)\to \Omega_2(A)\to 0
\end{equation}
where $\Lambda^3$ is the third exterior power and $\Omega_2$ is
the quadratic functor due to Eilenberg-MacLane, which is in fact
the first derived functor of the exterior square. The sequence
(\ref{seqh3}) splits as a sequence of abelian groups. One can ask
weather (\ref{seqh3}) splits as a sequence of functors. We prove
(see corollary \ref{corr2}) that (\ref{seqh3}) this is not the
case. More generally, we will prove that, for all $n\geq 3$, the
natural injection $\Lambda^n(A)\hookrightarrow H_n(A)$ induced by
Pontryagin product in homology, does not split naturally (see
proposition \ref{teq})\footnote{After posting the paper the author
was informed by N.Kuhn that this result follows from example 7.6
\cite{Kuhn}.}.

The paper is organized as follows. We recall in section 2 the
description of the polynomial functors on the category of free
abelian groups in terms of maps between cross-effects from
\cite{BDFP}. The language of polynomial $\mathbb Z$-modules
developed in \cite{BDFP} and \cite{Bau} is useful for the
description of $Hom$ and $Ext$-groups for polynomial functors in
the category of free abelian groups. We use this language for
proving that certain exact sequences do not split.

We describe the third stable homotopy group of $K(A,1)$ as a
functor in section \ref{thirdsec}.  We show that, for a free
abelian group $A$, there is a short exact sequence
$$
0\to S^2(A)\otimes \mathbb Z/2\to \pi_3^SK(A,1)\to \Lambda^3(A)\to
0
$$
which does not split naturally. Moreover, the functor
$\pi_3^SK(A,1)$ represents the unique non-trivial element in the
group of functorial extensions $Ext(\Lambda^3,S^2\otimes\mathbb
Z/2)=\mathbb Z/2$. In section \ref{fourthsec} we give a functorial
description of the fourth stable homotopy group of $K(A,1)$ for a
free abelian group $A$.

On the other hand, there are some cases in which the existence of
a functorial splitting can be proved. For an object $C$ of the
derived category of abelian groups concentrated in non-positive
dimensions $\sf DAb_{\leq 0}$, we show that the exact triangle in
$\sf DAb_{\leq 0}$
$$
LS^2(C[1])\to L\Gamma_2(C[1])\to C\lotimes \mathbb Z/2[1]\to
LS^2(C[1])[1]
$$
induces the functorial splitting (theorem \ref{the1})
\begin{equation}\label{mz7}
\pi_i(L\Gamma_2(C[1]))\simeq \pi_i(LS^2(C[1])))\oplus
\pi_i\left(C\lotimes \mathbb Z/2[1]\right),\ i\geq 1
\end{equation}
While the well-known theorem of Dold \cite{Dold} implies that
there is a splitting on the level of complexes which induces the
splitting (\ref{mz7}) on homotopy, we show that this splitting is
not functorial, i.e.
$$
L\Gamma_2(C[1])\neq LS^2(C[1])\oplus C\lotimes\mathbb Z/2[1]
$$
in the derived category $\sf DAb_{\leq 0}$.

\vspace{.5cm}
\section{Polynomial functors}
\vspace{.5cm} Denote by $\sf Ab$ (resp. $\sf fAb$) the category of
finitely generated abelian (resp. f.g. free abelian) groups. For a
small category $\sf C$, let $Fun(\sf C, \sf Ab)$ be the category
of functors from $\sf C$ to $\sf Ab$. Morphisms in $Fun(\sf C,\sf
Ab)$ are natural transformations between functors. It is
well-known that $Fun(\sf C,\sf Ab)$ is an abelian category with
enough projectives and injectives. By $\sf DAb_{\leq 0}$ we mean
the derived category of abelian groups living in non-negative
degrees which is equivalent to the homotopy category of simplicial
abelian groups via the Dold-Kan correspondence \cite{DoldPuppe}.

The main functors which we will consider are the following $(n\geq
1)$:
\begin{itemize}
\item Tensor powers $\otimes^n: \sf Ab\to Ab$\\

\item Symmetric powers $S^n: \sf Ab\to Ab$\\

\item Exterior powers $\Lambda^n: \sf Ab\to Ab$\\

\item Divided powers $\Gamma_n: \sf Ab\to Ab$\\

\item Antisymmetric square $\widetilde \otimes^2: \sf Ab\to Ab$,
defined as
$$
\widetilde\otimes^2(A):=A\otimes A/\{a\otimes b+b\otimes a,\
a,b\in A\}
$$
\end{itemize}

We will use the same notation for functors on $\sf Ab$ and for
their restriction on $\sf fAb$. To distinguish $Hom$ and $Ext$
groups for functors on $\sf Ab$ and $\sf fAb$, we will use the
notation $Hom(F,G)$ (resp. $Ext(F,G)$) for ordinal natural
transformations (resp. extensions) of functors $F,G: \sf Ab\to Ab$
and $Hom_{f}(F,G)$ (resp. $Ext_f(F,G)$) for functors $F,G: \sf
fAb\to Ab$.

Let $F: {\sf Ab\to Ab}$ be a functor. Recall that the {\it
cross-effects} of $F$ are multi-functors defined as
\begin{equation}\label{defcross}
F(X_1|\dots |X_n)=ker\{F(X_1\oplus \dots \oplus X_n)\to
\oplus_{i=1}^n F(X_1\oplus \dots \hat X_i\dots \oplus X_n )\},\
X_i\in {\sf Ab},\ n\geq 2
\end{equation}
where the maps $F(X_1\oplus \dots \oplus X_n)\to F(X_1\oplus \dots
\hat X_i\dots \oplus X_n)$ are induced by natural retractions. The
functor $F$ is polynomial of degree $d$ $(d\geq 1)$ if $F(0)=0$
and $F(X_1|\dots|X_d)$ is linear in each variable $X_i,\
i=1,\dots, d$.

Given a functor $F$ and an abelian group $A$, consider the system
of abelian groups:
$$
F_1=F(A),\ F_2=F(A|A),\ \dots, F_n=F(A|\dots |A)\ (n\ \text{copies
of}\ A)
$$
together with the homomorphisms
$$
H_m^n: F_n(A)\to F_{n+1}(A),\ P_m^n: F_{n+1}(A)\to F_n(A),
m=1,2\dots,\ m<n
$$
which are defined as composite maps
\begin{align*}
& H_m^{n+1}: F_n(A)\hookrightarrow F(A^{\oplus n})\to
F(A^{\oplus n+1})\twoheadrightarrow F_{n+1}(A)\\
& P_m^{n+1}: F_{n+1}(A)\to F(A^{\oplus n+1})\to F(A^{\oplus n})\to
F_n(A),
\end{align*}
induced by natural maps $A^{\oplus n}\to A^{\oplus n+1},\
A^{\oplus n+1}\to A^{\oplus n}$ given by
\begin{align*}
& (a_1,\dots, a_n)\to (a_1,\dots, a_m,a_m,\dots, a_n)\\
& (a_1,\dots, a_{n+1})\to (a_1,\dots, a_{m-1},
a_m+a_{m+1},a_{m+2},\dots, a_{n+1}).
\end{align*}
Denote the system of these abelian groups and maps by $J_F(A)$:
$$
J_F(A):\ F_1(A)\
\begin{matrix}\longrightarrow\\[-2.5mm]
\longleftarrow\end{matrix}\ F_2(A)\
\begin{matrix}\longrightarrow\\[-3.5mm]\longrightarrow\\[-3.5mm]
\longleftarrow\\[-3.5mm]\longleftarrow \end{matrix}\
F_3(A)\
\begin{matrix}\longrightarrow\\[-2.5mm]\dots\\[-2.5mm]\longleftarrow
\end{matrix}\ \cdots
$$

These maps satisfy certain standard relations \cite{BDFP}, which
do not depend on $F$ and $A$. For a polynomial functor $F$ of
degree $d$ and an abstract collection of $d$ abelian groups
$\{F_i(\mathbb Z)\}_{i=1,\dots,d}$ together with corresponding
maps which satisfy these relations is known as $d$-polynomial
$\mathbb Z$-module. Polynomial functors from free abelian groups
to abelian groups can be described in terms of polynomial $\mathbb
Z$-modules \cite{BDFP}. We now consider some particular cases.

\vspace{.5cm}\noindent {\it 1. Quadratic functors.} In the case of
quadratic functors (see \cite{Bau}), the required relations are
simple:
\begin{equation}\label{qzm}
A_1\buildrel{H_1^2}\over\longrightarrow\ A_2\
\buildrel{P_1^2}\over\longrightarrow\ A_1
\end{equation}
$$
H_1^2P_1^2H_1^2=2H_1^2,\ \ P_1^2H_1^2P_1^2=2P_1^2
$$
Such a diagram of abelian groups is called a quadratic $\mathbb
Z$-module. It is easy to compute the quadratic $\mathbb
Z$-modules, which correspond to the classical quadratic functors
mentioned above and to $-\otimes \mathbb Z/2$. Here they are:
\begin{align*}
& \mathbb Z^{\otimes^2}=(\mathbb Z\buildrel{(1,1)}\over\rightarrow
\mathbb Z\oplus\mathbb Z
\buildrel{(1,1)}\over\rightarrow \mathbb Z)\\
& \mathbb Z^{\Lambda^2}=(0\rightarrow \mathbb Z\rightarrow 0)\\
& \mathbb Z^{\Gamma_2}=(\mathbb Z\buildrel{1}\over\rightarrow
\mathbb
Z \buildrel{2}\over\rightarrow \mathbb Z)\\
& \mathbb Z^{SP^2}=(\mathbb Z\buildrel{2}\over\rightarrow \mathbb
Z
\buildrel{1}\over\rightarrow \mathbb Z)\\
& \mathbb Z^{\tilde \otimes^2}=(\mathbb Z/2\buildrel{0}\over\rightarrow \mathbb Z\buildrel{1}\over\rightarrow \mathbb Z/2)\\
 & \mathbb Z^{\mathbb Z/2}=(\mathbb
Z/2\rightarrow 0\rightarrow \mathbb Z/2)
\end{align*}
Given a quadratic $\mathbb Z$-module $M$ (\ref{qzm}), one can
define a quadratic functor on the category of abelian groups as
follows (see 6.13 \cite{Bau}): for an abelian group $A$, $A\otimes
M$ is generated by the symbols $a\otimes m, \{a,b\}\otimes n,\
a,b\in A,\ m\in A_1, n\in A_2$ with the relations
\begin{align*}
& (a+b)\otimes m=a\otimes m+b\otimes m+\{a,b\}\otimes H_1^2(m),\\
& \{a,a\}\otimes n=a\otimes P_1^2(n),\\
& a\otimes m\ \text{is linear in}\ m,\\
& \{a,b\}\otimes n\ \text{is linear in}\ a,b, n.
\end{align*}
The correspondence $A\mapsto A\otimes M$ defines a quadratic
functor and an equivalence between categories of quadratic
$\mathbb Z$-modules and quadratic functors $\sf fAb\to Ab$.

\vspace{.5cm}\noindent {\it 2. Cubical functors.} The cubical
$\mathbb Z$-module is given by the diagram
$$ A_1\
\begin{matrix}\buildrel{H_1^2}\over\longrightarrow\\[-2.5mm]
\buildrel{P_1^2}\over\longleftarrow\end{matrix}\ A_2\
\begin{matrix}\buildrel{H_2^3}\over\longrightarrow\\[-2.5mm]\buildrel{H_1^3}\over\longrightarrow\\[-2.5mm]
\buildrel{P_2^3}\over\longleftarrow\\[-2.5mm]\buildrel{P_1^3}\over\longleftarrow \end{matrix}
\ A_3
$$
with the following relations (see \cite{BDFP}, \cite{Drozd}):
\begin{align*}
& H_1^3H_1^2=H_2^3H_1^2,\ P_1^2P_1^3=P_1^2P_2^3,\ H_2^3P_1^3=0,\\
& H_1^3P_2^3=0,\ H_1^3P_1^3H_1^3=2H_1^3,\
P_1^3H_1^3P_1^3=2P_1^3,\\
& H_2^3P_2^3H_2^3=2H_2^3,\ P_2^3H_2^3P_2^3=2P_2^3,\\
& H_1^2P_1^2H_1^2=2H_1^2+2(P_1^3+P_2^3)H_1^3H_1^2,\\
& P_1^2H_1^2P_1^2=2P_1^2+2P_1^2P_1^3(H_2^3+H_1^3),\\
&
H_1^3H_1^2P_1^2+H_1^3+H_2^3=H_2^3P_2^3H_1^3P_1^3H_2^3+H_1^3P_1^3H_2^3P_2^3H_1^3,\\
&
H_1^2P_1^2P_1^3+P_1^3+P_2^3=P_2^3H_1^3P_1^3H_2^3P_2^3+P_1^3H_2^3P_2^3H_1^3P_1^3
\end{align*}
The simplest examples of the cubical $\mathbb Z$-modules, which
correspond to the exterior and symmetric cubes are the following:
\begin{align}
& \Lambda^3 \rightsquigarrow\ \ \ \ \ \ \ 0\ \begin{matrix}\longrightarrow\\[-2.5mm]
\longleftarrow\end{matrix}\ 0\
\begin{matrix}\longrightarrow\\[-2.5mm]\longrightarrow\\[-2.5mm]
\longleftarrow\\[-2.5mm]\longleftarrow \end{matrix}
\ \mathbb Z\label{l3m}\\ & S^3 \rightsquigarrow\ \ \ \ \ \ \
\mathbb Z\
\begin{matrix}\buildrel{(3,3)}\over\longrightarrow\\[-2.5mm]
\buildrel{(1,1)}\over\longleftarrow\end{matrix}\ \mathbb Z\oplus
\mathbb Z\
\begin{matrix}\buildrel{(0,2)}\over\longrightarrow\\[-2.5mm]\buildrel{(2,0)}\over\longrightarrow\\[-2.5mm]
\buildrel{(0,1)}\over\longleftarrow\\[-2.5mm]\buildrel{(1,0)}\over\longleftarrow \end{matrix}
\ \mathbb Z\label{sp3m}
\end{align}

\vspace{.5cm}\noindent 3. {\it $\Delta$-properties.} For any
functor $F$, the sequence
$$
\mathcal F(\mathbb Z):\ \ \ F_1(\mathbb Z)\
\buildrel{P_1^2}\over\longleftarrow\ F_2(\mathbb Z)\
\begin{matrix}\buildrel{P_2^3}\over\longleftarrow\\[-2.5mm]\buildrel{P_1^3}\over\longleftarrow \end{matrix}\
F_3(\mathbb Z)\
\begin{matrix}\longleftarrow\\[-2.5mm]\dots\\[-2.5mm]\longleftarrow
\end{matrix}\ \cdots
$$
is a $\Delta$-group, that is, the standard simplicial relations
for the face maps are satisfied. Taking the homology of this
complex, we obtain the values of the derived functors in the sense
of Dold-Puppe \cite{DoldPuppe}:
$$
H_i(\mathcal F(\mathbb Z))\simeq L_{i+1}F(\mathbb Z,1)
$$
This follows from the fact that the cross-effect spectral sequence
from \cite{DoldPuppe} degenerates to the complex $\mathcal
F(\mathbb Z)$.

\subsection{Natural transformations between functors}

All natural transformations between quadratic functors $\sf fAb\to
Ab$ are given as morphisms of corresponding quadratic $\mathbb
Z$-modules. One can therefor use quadratic $\mathbb Z$-modules for
the computation of the group of natural transformations between
given quadratic
functors. \\ \\
{\bf Examples.} 1. A natural map $S^2(A)\to \Gamma_2(A)$ is given
by the following diagram\footnote{For a map between cyclic groups
$f: A\to B$, we will use the notation $A\buildrel{n}\over\to B$ if
$f(a)=nb,$ where $a$ and $b$ are some given generators of $A$ and
$B$. Analogously we describe the maps between finitely-generated
abelian groups by integral matrices.}:
$$
\xyma{\mathbb Z \ar@{->}[r]^2 \ar@{->}[d]^{2k} & \mathbb
Z\ar@{->}[r]^1 \ar@{->}[d]^k &
\mathbb Z\ar@{->}[d]^{2k}\\
\mathbb Z\ar@{->}[r]^1 & \mathbb Z\ar@{->}[r]^2 &\mathbb Z}
$$
for $k\in\mathbb Z$, and $Hom_f(S^2,\Gamma_2)=\mathbb Z$.

\vspace{.5cm}\noindent 2. The natural map $\Gamma_2(A)\to A\otimes
\mathbb Z/2$ is given by the following diagram:
$$
\xyma{\mathbb Z \ar@{->}[r]^1 \ar@{->>}[d] & \mathbb
Z\ar@{->}[r]^2 \ar@{->}[d] &
\mathbb Z\ar@{->>}[d]\\
\mathbb Z/2\ar@{->}[r]& 0\ar@{->}[r] &\mathbb Z/2}
$$

\noindent 3. Let us now prove that there do not exist non-zero
natural maps
\begin{equation}\label{mz1}
\Lambda^2(A)\to S^2(A),\ \ S^2(A)\to A\otimes \mathbb Z/2
\end{equation}
Since the functors in (\ref{mz1}) are right exact, it is enough to
consider these functors on the category of free abelian groups.
Hence we can look at morphisms between the corresponding quadratic
$\mathbb Z$-modules. To every map $\Lambda^2\to SP^2$ corresponds
a morphism of quadratic $\mathbb Z$-modules:
$$
\xyma{0 \ar@{->}[r] \ar@{->}[d] & \mathbb Z\ar@{->}[r] \ar@{->}[d]
&
0\ar@{->}[d]\\
\mathbb Z\ar@{->}[r]^2 & \mathbb Z\ar@{->}[r]^1 &\mathbb Z}
$$
We see that the middle vertical map must be zero, hence the
result. Same reasoning applies to the natural transformation
$S^2(A)\to A\otimes \mathbb Z/2$:
$$
\xyma{\mathbb Z \ar@{->}[r]^2 \ar@{->}[d] & \mathbb Z\ar@{->}[r]^1
\ar@{->}[d] &
\mathbb Z\ar@{->}[d]\\
\mathbb Z/2\ar@{->}[r] & 0\ar@{->}[r] &\mathbb Z/2}
$$
We see that any such vertical map is zero. Hence, there is no any
non-zero natural transformations (\ref{mz1}).

\vspace{.5cm}\noindent 4. Consider the case of cubical functors.
The functors $S^3$ and $\Lambda^3$ are right exact, so that in
order to prove that $Hom(S^3,\Lambda^3)=0$ it is enough to show
that $Hom_f(S^3,\Lambda^3)=0$, i.e. that any map between cubical
$\mathbb Z$-modules \ref{sp3m} and \ref{l3m} is zero. It is easy
to see that all vertical maps in the following commutative
diagrams are zero:
\begin{center}
\begin{tabular}{cccccccccccc}
$\mathbb Z$ &
$\begin{matrix}\buildrel{(3,3)}\over\longrightarrow\\[-2.5mm]
\buildrel{(1,1)}\over\longleftarrow\end{matrix}$ & $\mathbb
Z\oplus \mathbb Z$ & $
\begin{matrix}\buildrel{(0,2)}\over\longrightarrow\\[-2.5mm]\buildrel{(2,0)}\over\longrightarrow\\[-2.5mm]
\buildrel{(0,1)}\over\longleftarrow\\[-2.5mm]\buildrel{(1,0)}\over\longleftarrow \end{matrix}$
& $\mathbb Z$\\
$\downarrow$ & & $\downarrow$ & & $ \downarrow$\\ 0 &
$\begin{matrix}\longrightarrow\\[-3.5mm]
\longleftarrow\end{matrix}$ & 0 &
$\begin{matrix}\longrightarrow\\[-3.5mm]\longrightarrow\\[-3.5mm]
\longleftarrow\\[-3.5mm]\longleftarrow \end{matrix}$
& $\mathbb Z$\\
\end{tabular}\ \ \ \ \ \ \ \ \ \ \ \ \ \
\begin{tabular}{cccccccccccc}
0 &
$\begin{matrix}\longrightarrow\\[-3.5mm]
\longleftarrow\end{matrix}$ & 0 &
$\begin{matrix}\longrightarrow\\[-3.5mm]\longrightarrow\\[-3.5mm]
\longleftarrow\\[-3.5mm]\longleftarrow \end{matrix}$
& $\mathbb Z$\\
$\downarrow$ & & $\downarrow$ & & $ \downarrow$\\
$\mathbb Z$ &
$\begin{matrix}\buildrel{(3,3)}\over\longrightarrow\\[-2.5mm]
\buildrel{(1,1)}\over\longleftarrow\end{matrix}$ & $\mathbb
Z\oplus \mathbb Z$ & $
\begin{matrix}\buildrel{(0,2)}\over\longrightarrow\\[-2.5mm]\buildrel{(2,0)}\over\longrightarrow\\[-2.5mm]
\buildrel{(0,1)}\over\longleftarrow\\[-2.5mm]\buildrel{(1,0)}\over\longleftarrow \end{matrix}$
& $\mathbb Z$\\
\end{tabular}
\end{center}
This case is very simple due to the structure of cross-effects of
$\Lambda^3$ and can be easily extended to high dimensional
symmetric and exterior powers.

We collect the $Hom$-functors between main quadratic functors in
the category of free abelian groups in the following table:
\vspace{1cm} \begin{center}
\begin{tabular}{ccccccccccccccccc}
 $G\setminus F$ & \vline & $\Gamma_2$ & $\otimes^2$ & $\widetilde\otimes^2$ & $\mathbb Z/2$ & $S^2$ & $\Lambda^2$ & $\Lambda^2\otimes \mathbb Z/2$\\
 \hline
$\Gamma_2$ & \vline & $\mathbb Z$ & $\mathbb Z$ & 0 & 0 & $\mathbb Z$ & 0 & 0\\
$\otimes^2$ & \vline & $\mathbb Z$ & $\mathbb Z\oplus \mathbb Z$ & $\mathbb Z$ & 0 & $\mathbb Z$ & $\mathbb Z$ & 0\\
$\widetilde\otimes^2$ & \vline & $\mathbb Z/2$ & $\mathbb Z$ & $\mathbb Z$ & $\mathbb Z/2$ & 0 & $\mathbb Z$ & 0\\
$\mathbb Z/2$ & \vline & $\mathbb Z/2$ & 0 & 0 & $\mathbb Z/2$ & 0 & 0 & 0\\
$S^2$ & \vline & $\mathbb Z$ & $\mathbb Z$ & 0 & 0 & $\mathbb Z$ & 0 & 0\\
$\Lambda^2$ & \vline & 0 & $\mathbb Z$ & $\mathbb Z$ & 0 & 0 & $\mathbb Z$ & 0\\
$\Lambda^2\otimes \mathbb Z/2$ & \vline & 0 & $\mathbb Z/2$ &
$\mathbb Z/2$ & 0 & $\mathbb Z/2$ & $\mathbb Z/2$ &
$\mathbb Z/2$\\
\end{tabular}
\end{center}
\vspace{.5cm} \begin{center} Table 1. $Hom_f(F,G)$\end{center}

 \vspace{1cm}
Now observe that some of the polynomial functors $F$ of degree $n$
have the property that $Hom(F,G)=Hom(G,F)=0$ for any functor $G$
of degree less than $n$. Let us now consider examples of functors
which do not satisfy this property.

\vspace{.5cm}\noindent 1. For an abelian group $A$, we have a
natural map
$$
\Gamma_2(A)\to A\otimes \mathbb Z/2
$$
The kernel $K(A)$ of the natural map
$$ \Gamma_2(A)\to A\otimes A
$$
defines a functor $K$ in the category of abelian groups (see
\cite{BP} for the description of this functor). A simple analysis
shows that $K$ is a linear functor.

\vspace{.5cm}\noindent 2. There are natural transformations
$$
Tor(A,\mathbb Z/2)\to S^2(A),\ \ a\mapsto a^2,\ a\in A,\ 2a=0
$$
and
$$
A\otimes Tor(A,\mathbb Z/2)\to S^3(A),\ \ a\otimes b\mapsto ab^2,\
a,b\in A,\ 2b=0.
$$

\vspace{.5cm} Now observe, that for any functor $F$ from the set
$\{\Lambda^n,\ \Lambda^n\otimes \mathbb Z/p,\ \otimes^n,\
\otimes^n\otimes \mathbb Z/p\ \ \ (p\ \  \text{is a prime})\},$
the natural map
$$
F(A)\to F(A|\dots|A)\ (n-\text{th cross effect})
$$
induced by the diagonal embedding $A\hookrightarrow A\oplus \dots
\oplus A\ (n\ \text{copies of A})$ is injective. It follows that
$Hom(G,F)=0$ for every functor $G$ of degree less than $n$.
Similarly, the natural projection
$$A\oplus \dots \oplus A\mapsto A,\ (a_1,\dots, a_n)\mapsto
a_1+\dots+a_n,\ a_i\in A,$$ induces a natural epimorphism
$$
F(A|\dots |A)\to F(A)
$$
hence $Hom(F,G)=0$ for every functor $G$ of degree less than $n$.

\subsection{Extensions between functors}
A natural short exact sequence
$$
0\to S^2(A)\otimes \mathbb Z/2\to \Gamma_2(A\otimes \mathbb
Z/4)\to \Gamma_2(A\otimes \mathbb Z/2)\to 0
$$
is given by the following short exact sequence of the quadratic
$\mathbb Z$-modules\footnote{we will always display quadratic
$\mathbb Z$-modules horizontally}:
\begin{equation}\label{mz2}
\xyma{\mathbb Z/2 \ar@{->}[r]^0\ar@{>->}[d] & \mathbb Z/2
\ar@{>->}[d] \ar@{->}[r]^1 & \mathbb Z/2\ar@{>->}[d]\\ \mathbb Z/8
\ar@{->>}[r] \ar@{->>}[d] & \mathbb Z/4 \ar@{>->}[r] \ar@{->>}[d]
& \mathbb Z/8\ar@{->>}[d]\\ \mathbb Z/4\ar@{->>}[r] & \mathbb Z/2
\ar@{>->}[r] & \mathbb Z/4}
\end{equation}
Similarly, the natural exact sequence
$$
0\to \Lambda^2(A)\otimes \mathbb Z/2\to \Gamma_2(A)\otimes \mathbb
Z/2\to A\otimes \mathbb Z/2\to 0
$$
is given by
$$
\xyma{0 \ar@{->}[r]\ar@{->}[d] & \mathbb Z/2 \ar@{->}[d]^1
\ar@{->}[r] & 0\ar@{->}[d]\\ \mathbb Z/2 \ar@{->}[r]^1 \ar@{->}[d]
& \mathbb Z/2 \ar@{->}[r]^0 \ar@{->}[d] & \mathbb Z/2\ar@{->}[d]\\
\mathbb Z/2\ar@{->}[r] & 0\ar@{->}[r] & \mathbb Z/2}
$$

The following proposition follows directly from the structure of
polynomial $\mathbb Z$-module which corresponds to the exterior
power.

\begin{prop}\label{lambda3}
Let $F$ be a functor of degree $d$, then
$Ext_f(F,\Lambda^{d+2})=Ext_f(\Lambda^{d+2},F)=0$.
\end{prop}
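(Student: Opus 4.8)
The plan is to transport both $Ext$-groups to a question about extensions of polynomial $\mathbb Z$-modules, using the equivalence of abelian categories recalled above (following \cite{BDFP}), and then to exploit the extremely degenerate shape of the $\mathbb Z$-module attached to $\Lambda^{d+2}$. The first step is to note that any extension $E$ of $F$ by $\Lambda^{d+2}$ (in either order) is again a polynomial functor, of degree at most $d+2$, so the entire discussion takes place inside the category of polynomial functors on $\sf fAb$ of degree $\le d+2$. Subobjects, quotients, and extensions of such functors are again polynomial of degree $\le d+2$ (cross-effects are exact and the degree is inherited), so this is a subcategory of $Fun(\sf fAb,\sf Ab)$ closed under extensions; hence $Ext_f$ may be computed there, and under the equivalence it becomes $Ext^1$ in the category of $(d+2)$-polynomial $\mathbb Z$-modules.

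Next I would record the shape of the $\mathbb Z$-module of $\Lambda^{d+2}$. Exactly as in (\ref{l3m}) for $\Lambda^3$, the cross-effect $\Lambda^{d+2}(\mathbb Z|\cdots|\mathbb Z)$ with $k$ copies splits into summands $\Lambda^{i_1}(\mathbb Z)\otimes\cdots\otimes\Lambda^{i_k}(\mathbb Z)$ with $i_1+\cdots+i_k=d+2$ and each $i_j\ge 1$; since $\Lambda^{i}(\mathbb Z)=0$ for $i\ge 2$, this group vanishes unless $k=d+2$, in which case it is $\mathbb Z$. Thus the polynomial $\mathbb Z$-module of $\Lambda^{d+2}$ is concentrated in its top level $d+2$, with value $\mathbb Z$ and all lower levels zero, while the $\mathbb Z$-module of the degree-$d$ functor $F$ lives entirely in levels $\le d$.

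The core of the argument is then the following. Cross-effects are exact, being natural direct summands of the exact functor $G\mapsto G(\mathbb Z^{\oplus k})$, so applying them to a short exact sequence $0\to\Lambda^{d+2}\to E\to F\to 0$ yields, level by level, $E_k(\mathbb Z)\cong F_k(\mathbb Z)$ for $k\le d$, then $E_{d+1}(\mathbb Z)=0$, and finally $E_{d+2}(\mathbb Z)\cong\mathbb Z$; the same computation applies when the roles of $F$ and $\Lambda^{d+2}$ are exchanged. I would then use the fact that in a polynomial $\mathbb Z$-module all the structure maps $H_m^{n+1}$ and $P_m^{n+1}$ connect only adjacent levels $n$ and $n+1$. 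Since level $d+1$ of $E$ vanishes, every map relating the top level $d+2$ to the levels $\le d$ factors through the zero group $E_{d+1}(\mathbb Z)$ and is therefore zero. Consequently the $\mathbb Z$-module of $E$ splits as the direct sum of its top level, which is the $\mathbb Z$-module of $\Lambda^{d+2}$, and its part in levels $\le d$, which is the $\mathbb Z$-module of $F$; the defining relations hold on each summand because these are themselves the $\mathbb Z$-modules of genuine functors. Translating back through the equivalence gives $E\cong F\oplus\Lambda^{d+2}$, so every such extension splits and both $Ext$-groups vanish.

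The step I expect to require the most care is the decoupling: one must be certain that no relation in the $(d+2)$-polynomial $\mathbb Z$-module, including the more intricate cubical-type relations, secretly links level $d+2$ to the lower levels across the vanishing level $d+1$. This is guaranteed by the purely nearest-neighbour nature of the generating maps $H_m^{n+1},P_m^{n+1}$: any composite joining non-adjacent levels must pass through every intermediate level, and here one of them is $0$. Once this is in place, the splitting of the $\mathbb Z$-module, and hence of the extension, is forced.
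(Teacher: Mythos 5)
Your argument is correct and is precisely what the paper intends: the paper gives no proof beyond the remark that the proposition ``follows directly from the structure of the polynomial $\mathbb Z$-module which corresponds to the exterior power,'' and your elaboration --- the module of $\Lambda^{d+2}$ is concentrated in level $d+2$, exactness of cross-effects forces level $d+1$ of any extension to vanish, and the nearest-neighbour nature of the maps $H_m^{n+1},P_m^{n+1}$ then decouples the top level from the part in levels $\leq d$, splitting the extension --- supplies exactly the missing details. Nothing to add.
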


Using the language of quadratic $\mathbb Z$-modules, one can
compute the values for $Ext$-functors for main quadratic functors
$F,G: \sf fAb\to Ab$. For example,
\begin{align*}
& Ext_f(\Lambda^2\otimes \mathbb Z/2,\Gamma_2)=\mathbb Z/2\\
& Ext_f(-\otimes \mathbb Z/2, S^2)=\mathbb Z/2\\
& Ext_f(\Lambda^2\otimes \mathbb Z/2, -\otimes\mathbb Z/2)=\mathbb
Z/2\\
& Ext_f(\Gamma_2, S^2)=0\\
& Ext_f(-\otimes \mathbb Z/2, \Lambda^2\otimes \mathbb
Z/2)=\mathbb Z/2\\
& Ext_f(\Gamma_2,\Gamma_2)=0\\
& Ext_f(\Gamma_2,\Lambda^2\otimes \mathbb Z/2)=0
\end{align*}
The proofs are direct, they follow from computations of the
extensions between quadratic $\mathbb Z$-modules which correspond
to the quadratic functors.

The generators of the Ext-groups $Ext_f(\Lambda^2\otimes \mathbb
Z/2,\Gamma_2),$ $Ext_f(-\otimes \mathbb Z/2,S^2),$
$Ext_f(\Lambda^2\otimes\mathbb Z/2, -\otimes \mathbb Z/2)$ one can
find in the following diagram
$$
\xyma{S^2(A)\ar@{=}[r] \ar@{>->}[d] & S^2(A)\ar@{->}[d]^2\\
\Gamma_2(A) \ar@{>->}[r]^g \ar@{->>}[d] & S^2(A)\ar@{->>}[r]
\ar@{->>}[d] & \Lambda^2(A)\otimes \mathbb Z/2\ar@{=}[d]\\
A\otimes \mathbb Z/2 \ar@{>->}[r] & S^2(A)\otimes \mathbb Z/2
\ar@{->>}[r] &\Lambda^2(A)\otimes \mathbb Z/2}
$$
where the map $g$ is given by setting $g: \gamma_2(a)\mapsto a^2,\
a\in A$. The generators of the groups $Ext_f(\Gamma_2\otimes
\mathbb Z/2,-\otimes \mathbb Z/2),$ $Ext_f(-\otimes \mathbb Z/2,
\Lambda^2\otimes \mathbb Z/2)$ on can find in the following
diagram
$$
\xyma{A\otimes \mathbb Z/2 \ar@{>->}[r] \ar@{=}[d] & S^2(A)\otimes
\mathbb Z/2\ar@{->>}[r] \ar@{>->}[d] & \Lambda^2(A)\otimes \mathbb
Z/2\ar@{>->}[d] \\ A\otimes \mathbb Z/2 \ar@{>->}[r] &
\Gamma_2(A\otimes \mathbb Z/2) \ar@{->>}[d] \ar@{->>}[r] &
\Gamma_2(A)\otimes \mathbb Z/2 \ar@{->>}[d] \\ & A\otimes \mathbb
Z/2 \ar@{=}[r] & A\otimes \mathbb Z/2}
$$

\vspace{.5cm}
\section{Homology of abelian groups}
\vspace{.5cm} We will now show that (\ref{seqh3}) does not split
naturally. For an abelian group $A$, recall the bar-resolution
$$
\mathcal B(A):\ \ \ \ \dots\to \mathbb Z[A\oplus A\oplus
A]\buildrel{d_3}\over\to\mathbb Z[A\oplus A]\buildrel{d_2}\over\to
\mathbb Z[A]\buildrel{d_1}\over\to \mathbb Z
$$
where the differential
$$
d_i: \mathbb Z[A^{\oplus i}]\to \mathbb Z[A^{\oplus i-1}]
$$
is given by
$$
d_i: (a_1,\dots, a_i)\mapsto (a_1,\dots,
a_{i-1})+\sum_{j=1}^{i-1}(-1)^j(a_1,\dots, a_j+a_{j+1},\dots,
a_i)+(-1)^i(a_2,\dots, a_i).
$$
There is a natural isomorphism in the derived category $$ \mathbb
Z[A[1]]\simeq \mathcal B(A)
$$ and, in particular, an isomorphism
$$
H_i(A)\simeq H_i(\mathcal B(A)),\ i\geq 0.
$$
Some generators of the homology groups $H_i(A)$ can be easily
described in terms of $\mathcal B(A)$, for example, the map
$$
H_2(A)\simeq \Lambda^2(A)\to ker(d_2)
$$
is given by
$$
a\wedge b\mapsto (a,b)-(b,a),\ a,b\in A.
$$

Consider the functor $H_2(A;\mathbb Z/2)\simeq H_2(\mathcal
B(A)\otimes \mathbb Z/2)$. The universal coefficient theorem
implies that there is a natural exact sequence
\begin{equation}\label{sequ1}
0\to \Lambda^2(A)\otimes \mathbb Z/2 \to H_2(A;\mathbb Z/2)\to
Tor(A,\mathbb Z/2)\to 0
\end{equation}
Consider the map $f:H_2(A;\mathbb Z/2)\to H_2(A| A;\mathbb
Z/2)=A\otimes A\otimes \mathbb Z/2$ induced by the diagonal map
$A\to A\oplus A$. Suppose that the sequence (\ref{sequ1}) splits
naturally, i.e. $H_2(A;\mathbb Z/2)=\Lambda^2(A)\otimes \mathbb
Z/2\oplus Tor(A,\mathbb Z/2).$ Then the composition map
$Tor(A,\mathbb Z/2)\to H_2(A;\mathbb Z/2)\buildrel{f}\over\to
H_2(A|A;\mathbb Z/2)$ is zero, since we have seen that there is no
non-trivial natural transformation between a linear functor and
$A\otimes A\otimes \mathbb Z/2$. In particular, for $A=\mathbb
Z/2$, the map
$$
f: H_2(\mathbb Z/2;\mathbb Z/2)\to H_2(\mathbb Z/2|\mathbb
Z/2;\mathbb Z/2)
$$
is zero. Let $a$ be a generator of $A=\mathbb Z/2$. One has
$H_2(A;\mathbb Z/2)=\mathbb Z/2$ and the generator of this
$\mathbb Z/2$ in the bar-resolution can be chosen as $(a,a)\in
\mathbb Z/2[A\oplus A]$. This follows from the fact that $(a,a)\in
ker(d_2)\setminus im(d_3)$, since $im(d_3)$ lies in the
augmentation ideal of $\mathbb Z/2[A\oplus A]$. Taking
$B=A=\mathbb Z/2$ and $b$ as a generator of $B$, we now consider
the map of bar resolutions $\mathcal B(A)\to \mathcal B(A\oplus
B)$, induced by the diagonal map $A\to A\oplus B$. We see that the
image of the map $f$ is generated by an element $(a+b,a+b)\in
\mathbb Z/2[(A\oplus B)\oplus (A\oplus B)].$ It is easy to verify
that
$$
(a+b,a+b)+(a,a)+(b,b)\equiv (a,b)+(b,a)\mod im(d_3)
$$
in $\mathbb Z/2[(A\oplus B)\oplus (A\oplus B)]$. This implies that
the image of the element $(a+b,a+b)$ under the map $H_2(A\oplus
B;\mathbb Z/2)\to H_2(A|B;\mathbb Z/2)=A\otimes B\otimes \mathbb
Z/2$ is the same as the image of the element $(a,b)+(b,a)$.
However, the image of the element $(a,b)+(b,a)$ in $A\otimes
B\otimes \mathbb Z/2$ is exactly $a\otimes b\otimes 1,$ which is
the generator of $A\otimes B\otimes \mathbb Z/2$. This proves that
the sequence (\ref{sequ1}) does not split functorially.

\begin{lemma}\label{leq1}
There is a natural isomorphism
$$
H_2(A;\mathbb Z/2)\simeq H_3(A|\mathbb Z/2).
$$
\end{lemma}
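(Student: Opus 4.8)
The plan is to identify the cross-effect $H_3(A|\mathbb Z/2)$ with a reduced tensor product via the Künneth theorem and then to recognize the latter as mod-$2$ homology shifted down by one degree. First I would invoke $\mathbb Z[(A\oplus\mathbb Z/2)[1]]\simeq\mathbb Z[A[1]]\lotimes\mathbb Z[\mathbb Z/2[1]]$ in $\sf DAb$, i.e.\ the Eilenberg--Zilber equivalence $\mathcal B(A\oplus\mathbb Z/2)\simeq\mathcal B(A)\lotimes\mathcal B(\mathbb Z/2)$, which is natural in $A$. Writing $\overline{\mathcal B}(A)$ for the reduced bar complex (with homology $H_i(A)$ for $i\ge1$ and $0$ in degrees $\le0$), the natural splitting $\mathcal B=\mathbb Z\oplus\overline{\mathcal B}$ given by the augmentation expands the tensor product into four summands, of which $\overline{\mathcal B}(A)$ and $\overline{\mathcal B}(\mathbb Z/2)$ account for the copies of $H_*(A)$ and $H_*(\mathbb Z/2)$ split off by the two retractions in (\ref{defcross}), the remaining $\mathbb Z$ contributing only in degree $0$. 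Hence the kernel defining $H_3(A|\mathbb Z/2)$ is identified, naturally in $A$, with $H_3\big(\overline{\mathcal B}(A)\lotimes\overline{\mathcal B}(\mathbb Z/2)\big)$.

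Next I would apply the Künneth formula to this reduced tensor product. Because $\overline{\mathcal B}(A)$ has homology only in degrees $\ge1$, the degree-$3$ term involves $H_j(\mathbb Z/2)$ only for $j\le2$, namely $H_1(\mathbb Z/2)=\mathbb Z/2$ and $H_2(\mathbb Z/2)=0$; all higher homology of $\mathbb Z/2$ is invisible in this degree. Consequently the comparison depends only on the $2$-truncation of $\overline{\mathcal B}(\mathbb Z/2)$, which in $\sf DAb$ is $\mathbb Z/2[1]$, modelled by the complex $\mathbb Z\buildrel2\over\to\mathbb Z$ in degrees $2,1$.

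On the other side, $H_2(A;\mathbb Z/2)=H_2(\mathcal B(A)\lotimes\mathbb Z/2)$, and in reduced form this equals $H_3\big(\overline{\mathcal B}(A)\lotimes\mathbb Z/2[1]\big)$. A morphism in $\sf DAb$ realizing the truncation, relating $\mathbb Z/2[1]$ and $\overline{\mathcal B}(\mathbb Z/2)$ and inducing an isomorphism on $H_j$ for $j\le2$, then yields, after tensoring with $\overline{\mathcal B}(A)$ and passing to $H_3$, a natural isomorphism
$$H_2(A;\mathbb Z/2)=H_3\big(\overline{\mathcal B}(A)\lotimes\mathbb Z/2[1]\big)\stackrel{\sim}{\longrightarrow}H_3\big(\overline{\mathcal B}(A)\lotimes\overline{\mathcal B}(\mathbb Z/2)\big)=H_3(A|\mathbb Z/2),$$
which is the asserted isomorphism. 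As a sanity check, the Künneth summands on the right are $\Lambda^2(A)\otimes\mathbb Z/2$ and $\mathrm{Tor}(A,\mathbb Z/2)$, matching the two outer terms of the universal coefficient sequence (\ref{sequ1}) for $H_2(A;\mathbb Z/2)$.

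The main obstacle will be the careful bookkeeping in the first step: showing that the cross-effect is functorially the reduced tensor product $H_3\big(\overline{\mathcal B}(A)\lotimes\overline{\mathcal B}(\mathbb Z/2)\big)$, and that only degrees $\le2$ of $H_*(\mathbb Z/2)$ feed into degree $3$. This is precisely the point where the higher homology of $\mathbb Z/2$ --- the first extra class sits in $H_3(\mathbb Z/2)=\mathbb Z/2$, reflecting the difference between $K(\mathbb Z/2,1)$ and its low-dimensional skeleton --- must be shown not to contaminate the degree-$3$ cross-effect.
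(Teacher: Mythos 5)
Your proof is correct and follows essentially the same route as the paper: both start from the Eilenberg--Zilber equivalence $\mathcal B(A\oplus\mathbb Z/2)\simeq\mathcal B(A)\otimes\mathcal B(\mathbb Z/2)$ and identify the degree-$3$ cross-effect with $H_3(\mathcal B(A)\otimes\mathbb Z/2[1])=H_2(\mathcal B(A)\otimes\mathbb Z/2)=H_2(A;\mathbb Z/2)$. The only cosmetic difference is that the paper invokes the full derived-category splitting $\mathcal B(\mathbb Z/2)\simeq\mathbb Z\oplus\bigoplus_{n\geq 0}\mathbb Z/2[2n+1]$ and reads off the three degree-$3$ summands directly, whereas you split off only the augmentation and then discard the homology of $\overline{\mathcal B}(\mathbb Z/2)$ above degree $2$ by a truncation-plus-K\"unneth argument; both steps dispose of the extra class in $H_3(\mathbb Z/2)$ in the same way.
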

\begin{proof}
We have
$$
\mathcal B(A\oplus \mathbb Z/2)\simeq \mathcal B(A)\otimes
\mathcal B(\mathbb Z/2)
$$
Since
$$
\mathcal B(\mathbb Z/2)\simeq \mathbb Z\oplus\bigoplus_{n\geq 0}
\mathbb Z/2[2n+1],
$$
we have a natural isomorphism
\begin{multline}\label{za1}
H_3(A\oplus \mathbb Z/2)\simeq H_3(\mathcal B(A)\oplus (\mathcal
B(A)\otimes \mathbb Z/2[1])\oplus (\mathcal B(A)\otimes \mathbb
Z/2[3]))\simeq\\ H_3(A)\oplus H_2(\mathcal B(A)\otimes \mathbb
Z/2)\oplus H_3(\mathbb Z/2),
\end{multline}
where the natural maps $H_3(A)\to H_3(A\oplus \mathbb Z/2)$ and
$H_3(\mathbb Z/2)\to H_3(A\oplus \mathbb Z/2)$ are splitting
monomorphisms on the direct summands in (\ref{za1}). It follows
that
$$
H_3(A|\mathbb Z/2)\simeq H_2(\mathcal B(A)\otimes \mathbb
Z/2)\simeq H_2(A;\mathbb Z/2).
$$
\end{proof}
\begin{cor}\label{corr2}
The natural sequence (\ref{seqh3}) \begin{equation}\label{h3} 0\to
\Lambda^3(A)\to H_3(A)\to \Omega_2(A)\to 0
\end{equation}
does not split functorially.
\end{cor}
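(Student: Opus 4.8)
The plan is to deduce the non-splitting of (\ref{h3}) from the already established non-splitting of the universal coefficient sequence (\ref{sequ1}), using Lemma \ref{leq1} together with the exactness of the cross-effect construction. For any functor $F$ with $F(0)=0$ there is a natural direct sum decomposition $F(A\oplus \mathbb Z/2)\simeq F(A)\oplus F(\mathbb Z/2)\oplus F(A|\mathbb Z/2)$, so for a short exact sequence of functors $0\to F'\to F\to F''\to 0$ the sequence of cross-effects $0\to F'(A|\mathbb Z/2)\to F(A|\mathbb Z/2)\to F''(A|\mathbb Z/2)\to 0$ is a natural direct summand of the (pointwise exact) sequence obtained by evaluating at $A\oplus \mathbb Z/2$; hence it is exact, and a natural splitting of the original sequence induces a natural splitting of it. Thus, if (\ref{h3}) were to split functorially, applying the cross-effect at $\mathbb Z/2$ would yield a naturally split short exact sequence
$$
0\to \Lambda^3(A|\mathbb Z/2)\to H_3(A|\mathbb Z/2)\to \Omega_2(A|\mathbb Z/2)\to 0.
$$

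The next step is to identify the three terms. From $\Lambda^3(A\oplus \mathbb Z/2)\simeq \Lambda^3(A)\oplus(\Lambda^2(A)\otimes \mathbb Z/2)\oplus(A\otimes \Lambda^2(\mathbb Z/2))\oplus \Lambda^3(\mathbb Z/2)$ and the vanishing $\Lambda^2(\mathbb Z/2)=\Lambda^3(\mathbb Z/2)=0$, I read off $\Lambda^3(A|\mathbb Z/2)\simeq \Lambda^2(A)\otimes \mathbb Z/2$. By Lemma \ref{leq1} the middle term is $H_3(A|\mathbb Z/2)\simeq H_2(A;\mathbb Z/2)$. Feeding these identifications into the split sequence above and comparing with (\ref{sequ1}), the quotient term is forced to be $\Omega_2(A|\mathbb Z/2)\simeq Tor(A,\mathbb Z/2)$, so the cross-effect sequence is precisely (\ref{sequ1}). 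Since (\ref{sequ1}) does not split functorially, neither does (\ref{h3}), which is the assertion.

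The step I expect to be the main obstacle is verifying that these identifications are compatible with the structure maps, i.e. that the cross-effect at $\mathbb Z/2$ of the Pontryagin inclusion $\Lambda^3(A)\hookrightarrow H_3(A)$ coincides, under $\Lambda^3(A|\mathbb Z/2)\simeq \Lambda^2(A)\otimes \mathbb Z/2$ and Lemma \ref{leq1}, with the universal coefficient inclusion $\Lambda^2(A)\otimes \mathbb Z/2\hookrightarrow H_2(A;\mathbb Z/2)$ of (\ref{sequ1}); this is exactly the point at which the two extensions must be recognized as the same one. I would handle it at the level of the bar resolution, tracing the generator $a\wedge b\mapsto (a,b)-(b,a)$ of $\Lambda^2(A)\simeq H_2(A)$ through the K\"unneth isomorphism $\mathcal B(A\oplus \mathbb Z/2)\simeq \mathcal B(A)\otimes \mathcal B(\mathbb Z/2)$ used in the proof of Lemma \ref{leq1}, and checking that the resulting class in $H_3(A|\mathbb Z/2)$ agrees with the diagonal image of $\Lambda^3$. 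Once this compatibility is in place, the exactness of the cross-effect does the rest, and no functorial splitting of (\ref{h3}) can exist.
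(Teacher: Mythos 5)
Your argument is correct and follows essentially the same route as the paper: take the cross-effect with $\mathbb Z/2$, identify $H_3(A|\mathbb Z/2)$ with $H_2(A;\mathbb Z/2)$ via Lemma \ref{leq1} and $\Lambda^3(A|\mathbb Z/2)$ with $\Lambda^2(A)\otimes\mathbb Z/2$, and reduce to the non-splitting of (\ref{sequ1}). The one place where you diverge is that you insist on recognizing the cross-effect sequence as \emph{the same extension} as (\ref{sequ1}), and you correctly flag the compatibility of the structure maps (tracing the Pontryagin inclusion through the K\"unneth isomorphism) as the hard point; this step is in fact unnecessary. The bar-resolution computation that proves (\ref{sequ1}) does not split actually rules out \emph{any} natural direct-sum decomposition $H_2(A;\mathbb Z/2)\simeq \Lambda^2(A)\otimes\mathbb Z/2\oplus Tor(A,\mathbb Z/2)$, regardless of which maps realize it: for any such decomposition the composite $Tor(A,\mathbb Z/2)\to H_2(A;\mathbb Z/2)\to H_2(A|A;\mathbb Z/2)$ vanishes because there are no nonzero natural transformations from a linear functor to $A\otimes A\otimes\mathbb Z/2$, while at $A=\mathbb Z/2$ the summand $\Lambda^2(A)\otimes\mathbb Z/2$ is zero and the class $(a,a)$ maps nontrivially. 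So a putative splitting of (\ref{h3}) already gives a contradiction at the level of abstract natural isomorphisms of functors, and the generator-chasing you propose as the main obstacle can be dropped entirely.
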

\begin{proof}
Suppose that the sequence (\ref{h3}) splits naturally, i.e. there
is a natural isomorphism
$$
H_3(A)\simeq \Lambda^3(A)\oplus \Omega_2(A).
$$
This induces the following natural decomposition for the
cross-effect functor
$$
H_3(A|\mathbb Z/2)\simeq \Lambda^2(A)\otimes \mathbb Z/2\oplus
Tor(A,\mathbb Z/2).
$$
Lemma \ref{leq1} implies that there is a natural isomorphism
$$
H_2(A;\mathbb Z/2)\simeq \Lambda^2(A)\otimes \mathbb Z/2\oplus
Tor(A,\mathbb Z/2),
$$
however this contradicts the fact that the sequence (\ref{sequ1})
does not split functorially.
\end{proof}
Observe that, for any free abelian group, there is a natural
isomorphism
$$ H_2(A\otimes \mathbb Z/2;\mathbb Z/2)\simeq \Gamma_2(A)\otimes
\mathbb Z/2
$$
and the functor $H_2(A\otimes\mathbb Z/2;\mathbb Z/2)$ represents
the non-trivial element of $$ Ext_f(-\otimes \mathbb
Z/2,\Lambda^2\otimes \mathbb Z/2)=\mathbb Z/2.
$$
As a consequence of corollary \ref{corr2}, for a free abelian
group $A$, the functor
$$
A\mapsto H_3(A\otimes \mathbb Z/2)
$$
lives in the following short exact sequence
$$
0\to \Lambda^3(A)\otimes \mathbb Z/2\to H_3(A\otimes \mathbb
Z/2)\to \Gamma_2(A)\otimes \mathbb Z/2\to 0
$$
and is represented by the following cubical $\mathbb Z$-module:
$$
\mathbb Z/2\
\begin{matrix}\buildrel{1}\over\longrightarrow\\[-2.5mm]
\buildrel{0}\over\longleftarrow\end{matrix}\ \mathbb Z/2\
\begin{matrix}\buildrel{1}\over\longrightarrow\\[-2.5mm]\buildrel{1}\over\longrightarrow\\[-2.5mm]
\buildrel{0}\over\longleftarrow\\[-2.5mm]\buildrel{0}\over\longleftarrow
\end{matrix}\
\mathbb Z/2
$$
In particular, this functor represents the non-trivial element in
the corresponding $Ext$-group:
\begin{equation}\label{nonsplitting3} H_3(A\otimes \mathbb Z/2)\neq 0\in
Ext_f(\Gamma_2\otimes \mathbb Z/2,\Lambda^3\otimes \mathbb
Z/2)=\mathbb Z/2.\end{equation}

We are now ready to generalize corollary  \ref{corr2} to the case
of higher homology functors.
\begin{prop}\label{teq}
Let $A$ be an abelian group. For $n\geq 3$, the natural
monomorphism induced by Pontryagin product
$$
\Lambda^n(A)\hookrightarrow H_n(A)
$$
does not split naturally.
\end{prop}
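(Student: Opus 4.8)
The plan is to bootstrap from the case $n=3$, which is Corollary \ref{corr2}, by a cross-effect induction that trades a drop in homological degree for a reduction modulo $2$. It is convenient to isolate the auxiliary mod-$2$ statement $Q(m)$: \emph{the natural Pontryagin monomorphism $\Lambda^m(A)\otimes\mathbb Z/2\hookrightarrow H_m(A;\mathbb Z/2)$ does not split naturally}. The base case $Q(2)$ is precisely the non-splitting of (\ref{sequ1}) established by the bar-resolution computation preceding Lemma \ref{leq1}. I will first prove $Q(m)$ for all $m\geq 2$ by induction, and then deduce the integral statement for $H_n$ from $Q(n-1)$.

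For the inductive step I would take the cross-effect with a fixed copy of $\mathbb Z/2$. Using $\mathcal B(A\oplus\mathbb Z/2)\simeq\mathcal B(A)\otimes\mathcal B(\mathbb Z/2)$ together with the K\"unneth isomorphism over the field $\mathbb Z/2$ and the fact that $H_*(\mathbb Z/2;\mathbb Z/2)$ is the polynomial algebra on a class in degree $1$, one obtains a natural decomposition
$$
H_m(A\oplus\mathbb Z/2;\mathbb Z/2)\simeq\bigoplus_{j=0}^m H_{m-j}(A;\mathbb Z/2),
$$
so that the cross-effect $H_m(A|\mathbb Z/2;\mathbb Z/2)$ is the sum of the terms with $1\leq j$ and $m-j\geq 1$, whose top summand ($j=1$) is canonically $H_{m-1}(A;\mathbb Z/2)$. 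On the source side the cross-effect is $(\Lambda^m\otimes\mathbb Z/2)(A|\mathbb Z/2)=\Lambda^{m-1}(A)\otimes\mathbb Z/2$, and since the Pontryagin map is the cross product, the image of $\Lambda^{m-1}(A)\otimes\mathbb Z/2$ sits entirely in the degree $(m-1,1)$ part $H_{m-1}(A;\mathbb Z/2)\otimes H_1(\mathbb Z/2;\mathbb Z/2)$, i.e. exactly in the top summand, where the induced inclusion is again the mod-$2$ Pontryagin map. Now if $Q(m)$ failed, i.e. the inclusion for $H_m(-;\mathbb Z/2)$ split naturally, then applying the cross-effect construction (which carries a natural direct-sum decomposition to a natural direct-sum decomposition) would split the inclusion into $H_m(A|\mathbb Z/2;\mathbb Z/2)$; composing the resulting natural retraction with the inclusion of the top summand yields a natural retraction of $\Lambda^{m-1}(A)\otimes\mathbb Z/2\hookrightarrow H_{m-1}(A;\mathbb Z/2)$, contradicting $Q(m-1)$. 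This establishes $Q(m)$ for all $m\geq 2$.

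To finish, the same mechanism applied integrally deduces the proposition from $Q(n-1)$. Here $\mathcal B(\mathbb Z/2)\simeq\mathbb Z\oplus\bigoplus_{k\geq 0}\mathbb Z/2[2k+1]$, already used in Lemma \ref{leq1}, gives a natural decomposition of $H_n(A|\mathbb Z/2)$ as a sum of the groups $H_{n-2k-1}(A;\mathbb Z/2)$ with $k\geq 0$ and $n-2k-1\geq 1$, whose top summand is $H_{n-1}(A;\mathbb Z/2)$, while $\Lambda^n(A|\mathbb Z/2)=\Lambda^{n-1}(A)\otimes\mathbb Z/2$; a hypothetical natural splitting of $\Lambda^n(A)\hookrightarrow H_n(A)$ would, after taking the cross-effect and projecting onto the top summand through the universal coefficient inclusion $\Lambda^{n-1}(A)\otimes\mathbb Z/2\hookrightarrow H_{n-1}(A)\otimes\mathbb Z/2\hookrightarrow H_{n-1}(A;\mathbb Z/2)$, produce a natural splitting contradicting $Q(n-1)$, which holds for $n\geq 3$. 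The step I expect to require the most care is the verification that the Pontryagin image occupies precisely the top summand of the cross-effect and that the induced map there is the mod-$2$ Pontryagin inclusion; granting this, the projection trick turns each splitting into a splitting one homological degree lower, and the bar-resolution base case $Q(2)$ propagates all the way up.
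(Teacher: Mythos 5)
Your argument is correct, but it takes a genuinely different route from the paper's. The paper also inducts on $n$ via cross-effects, but it takes the cross-effect with a copy of $\mathbb Z$ rather than $\mathbb Z/2$: since $\mathcal B(\mathbb Z)\simeq \mathbb Z\oplus\mathbb Z[1]$, the K\"unneth formula gives $H_n(A|\,\mathbb Z)\simeq H_{n-1}(A)$ as a single summand with no torsion correction, while $\Lambda^n(A|\,\mathbb Z)\simeq\Lambda^{n-1}(A)$, and the Pontryagin map on cross-effects is again the integral Pontryagin inclusion one degree down. A hypothetical natural splitting therefore descends integrally from $n$ to $n-1$ in one line, and the induction terminates at the base case $n=3$, which is Corollary \ref{corr2}. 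Your choice of $\mathbb Z/2$ as the auxiliary variable forces the reduction mod $2$, and hence the auxiliary family $Q(m)$, the field K\"unneth decomposition with its many summands, and the verification that the Pontryagin image occupies exactly the top summand --- all of which you handle correctly, but which the paper avoids. What your route buys in exchange is that it bypasses Corollary \ref{corr2} as a separate input: everything is funnelled down to the single bar-resolution computation showing that (\ref{sequ1}) does not split (your $Q(2)$), and along the way you establish the stronger family of statements $Q(m)$ for all $m\geq 2$, namely the natural non-splitting of $\Lambda^m(A)\otimes\mathbb Z/2\hookrightarrow H_m(A;\mathbb Z/2)$. In effect you have generalized to all degrees the mechanism of Lemma \ref{leq1} and Corollary \ref{corr2}, which is precisely the cross-effect with $\mathbb Z/2$ in degree $3$, whereas the paper uses that mechanism only once and then switches to the cheaper cross-effect with $\mathbb Z$ for the induction.
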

\begin{proof}
For $n=3$ this is corollary \ref{corr2}. Now the result follows by
induction on $n$, observing that there is a natural isomorphism
$$
H_n(A|\ \mathbb Z)\simeq H_{n-1}(A)
$$
which follows from the K\"unneth formula. Indeed, assuming that
the monomorphism $\Lambda^n(A)\hookrightarrow H_n(A)$ splits
naturally, we get the natural splitting of the cross-effects
$$
\xyma{\Lambda^n(A|\ \mathbb Z)\ar@{>->}[r] \ar@{->}[d]^\simeq &
H_n(A|\ \mathbb Z) \ar@{->}[d]^\simeq\\ \Lambda^{n-1}(A)
\ar@{>->}[r] & H_{n-1}(A)}
$$
but the lower map is not split by induction hypothesis.
\end{proof}
\begin{remark}
For an odd prime $p$, the functor
$$
A\mapsto H_3(A\otimes \mathbb Z/p),\ A\in {\sf fAb}
$$
splits as
\begin{equation}\label{spodd}
H_3(A\otimes \mathbb Z/p)=\Lambda^3(A)\otimes \mathbb Z/p\oplus
\Gamma_2(A)\otimes \mathbb Z/p.
\end{equation}
\end{remark}
\begin{proof}
First we prove that $Ext_f(\Gamma_2\otimes \mathbb Z/p,
\Lambda^3\otimes \mathbb Z/p)=0$. Every element of this Ext-group
can be presented as a diagram of the form
\begin{center}
\begin{tabular}{ccccccccccc}
0 & $\begin{matrix}\longrightarrow\\[-2.5mm]\longleftarrow
\end{matrix}$ & 0 & $\begin{matrix}\longrightarrow\\[-3.5mm]\longrightarrow\\[-3.5mm]
\longleftarrow\\[-3.5mm]\longleftarrow \end{matrix}$ & $\mathbb Z/p$\\
$\downarrow$ & & $\downarrow$ & & $\downarrow^=$ \\
$\mathbb Z/p$ & $\begin{matrix}\buildrel{1}\over\longrightarrow\\[-2.5mm]\buildrel{2}\over\longleftarrow
\end{matrix}$ & $\mathbb Z/p$ & $\begin{matrix}\buildrel{h_1}\over\longrightarrow\\[-2.5mm]\buildrel{h_2}\over\longrightarrow\\[-2.5mm]
\buildrel{p_1}\over\longleftarrow\\[-2.5mm]\buildrel{p_2}\over\longleftarrow \end{matrix}$ & $\mathbb Z/p$\\
$\downarrow$ & & $\downarrow$ & & $\downarrow$ \\
$\mathbb Z/p$ &
$\begin{matrix}\buildrel{1}\over\longrightarrow\\[-2.5mm]\buildrel{2}\over\longleftarrow
\end{matrix}$ & $\mathbb Z/p$ & $\begin{matrix}\longrightarrow\\[-3.5mm]\longrightarrow\\[-3.5mm]
\longleftarrow\\[-3.5mm]\longleftarrow \end{matrix}$ & 0
\end{tabular}
\end{center}
It follows immediately that $p_1$ and $p_2$ are zero maps. The
relations
$$
h_1p_1h_1=2h_1,\ h_2p_2h_2=2h_2
$$
imply that $h_1$ and $h_2$ are zero map. Hence
$$Ext_f(\Gamma_2\otimes \mathbb Z/p, \Lambda^3\otimes \mathbb
Z/p)=0.$$ The splitting (\ref{spodd}) follows from the fact that,
for a free abelian $A$, the sequence (\ref{h3}) has the form
$$
0\to \Lambda^3(A)\otimes \mathbb Z/p\to H_3(A\otimes \mathbb
Z/p)\to \Gamma_2(A)\otimes \mathbb Z/p\to 0.
$$
\end{proof}

\vspace{.5cm}
\section{The third stable homotopy group of
$K(A,1)$}\label{thirdsec} \vspace{.5cm}
\subsection{Whitehead's exact sequence.} Let $X$ be a
$(r-1)$-connected CW-complex, $r\geq 2.$ Consider the following
long exact sequence of abelian groups \cite{Wh}:
\begin{equation}\label{white}
\dots H_{n+1}X\to \Gamma_nX\to \pi_nX\buildrel{h_n}\over\to
H_nX\to \Gamma_{n-1}X\to \dots, \end{equation} where
$\Gamma_nX=im\{\pi_nX^{n-1}\to \pi_nX^n\}$ (here $X^i$ is the
$i$-th skeleton of $X$), $h_n$ is the $n$th Hurewicz homomorphism.
The Hurewicz theorem is equivalent to the statement $\Gamma_iX=0,\
i\leq r.$ J.H.C. Whitehead computed the term $\Gamma_{r+1}X$ (see
\cite{Wh})\footnote{Care should be taken to distinguish between
Whitehead's functors $\Gamma_nX$ and the divided power functors
$\Gamma_i(A)$.}:
$$
\Gamma_{r+1}X=\begin{cases} \Gamma_2(\pi_2X),\ r=2\\
\pi_rX\otimes \mathbb Z/2,\ r>2\end{cases}
$$
where $\Gamma_2: \sf Ab\to Ab$ is the universal quadratic functor
(or equivalently the divided square).

Consider the stable analog of the Whitehead exact sequence in low
degrees. Here we recall the description of functors $\Gamma_i,\
i=r+1,r+2,r+3$ from \cite{BG}. Assume that $X$ is
$(r-1)$-connected complex, $r\geq 6$. In this case, we have the
following:
$$
\eta_1: \pi_r(X)\otimes \mathbb Z/2\to \pi_{r+1}(X)
$$
is induced by the Hopf map $\eta_r\in \pi_{r+1}(S^r),$ i.e.
$\eta^1(\alpha\otimes 1)=\alpha\eta_r$ and there is a natural
exact sequence
$$
0\to \pi_{r+1}(X)\otimes \mathbb Z/2\to \Gamma_{r+2}X\to
Tor(\pi_r(X),\mathbb Z/2)\to 0
$$
where the composite map $\pi_{r+1}(X)\otimes \mathbb Z/2\to
\pi_{r+2}(X)$ is induced by the Hopf map
$\eta_{r+1}\in\pi_{r+1}(S^r)$.

The description of the term $\Gamma_{r+3}X$ is given as follows.
There is a natural exact sequence \begin{equation}\label{stranges}
L_2\Gamma^2(\eta_1)\to \Gamma^3(\eta_1,\eta_2)\to \Gamma_{r+3}X\to
L_1\Gamma^2(\eta^1)\to 0,
\end{equation}
where the functors in this sequence can be described in the
following way:
\begin{align*}
& L_1\Gamma^2(\eta_1)=coker\{\pi_r(X)\otimes \mathbb
Z/2\buildrel{\eta_1}\over\to Tor(\pi_{r+1}(X),\mathbb Z/2)\}\\
& L_2\Gamma^2(\eta_1)=ker(\eta_1)
\end{align*}
and $\Gamma^3(\eta_1,\eta_2)=\pi_r(X)\otimes \mathbb Z/3\oplus P,$
where $P$ is given by the pushout
\begin{equation}\label{pushout}
\xyma{\pi_r(X)\otimes \mathbb Z/2 \ar@{->}[d]^{\pi_r(X)\otimes 4}
\ar@{->}[r] &
\pi_{r+2}\otimes \mathbb Z/2\ar@{->}[d]\\
\pi_r(X)\otimes \mathbb Z/8 \ar@{->}[r] & P}
\end{equation}
where the upper horizontal map induced by the map $S^{r+2}\to
S^r$, which defines a generator of $\pi_2^S=\mathbb Z/2$.

\subsection{Spectral sequence} Recall the spectral sequence from \cite{MikhailovWu}. Consider an abelian group $A$ and
its two-step flat resolution
$$
0\to A_1\to A_0\to A\to 0.
$$
By Dold-Kan correspondence, one obtains the following free abelian
simplicial resolution of ~$A$:
$$
N^{-1}(A_1\hookrightarrow A_0):\ \ \ \ \ldots \begin{matrix}\longrightarrow\\[-3.5mm]\longrightarrow\\[-3.5mm]\longrightarrow\\[-3.5mm]\longleftarrow\\[-3.5mm]
\longleftarrow
\end{matrix}\ A_1\oplus s_0(A_0)\ \begin{matrix}\longrightarrow\\[-3.5mm]\longrightarrow\\[-3.5mm]
\longleftarrow \end{matrix}\ A_0.$$

Applying Carlsson construction (see \cite{Carl} or
\cite{MikhailovWu} for the detailed description of this
construction) to the resolution $N^{-1}(A_1\hookrightarrow A_0)$,
we obtain the following bisimplicial group:
\begin{center}
\begin{tabular}{ccccccccccc}
$F^{N^{-1}(A_1\hookrightarrow A_0)_2}(S^n)_3$ & $\begin{matrix}\longrightarrow\\[-3.5mm]\longrightarrow\\[-3.5mm]\longrightarrow\\[-3.5mm]\longrightarrow\\[-3.5mm]\longleftarrow\\[-3.5mm]
\longleftarrow\\[-3.5mm]\longleftarrow
\end{matrix}$ & $F^{N^{-1}(A_1\hookrightarrow A_0)_2}(S^n)_2$ & $\begin{matrix}\longrightarrow\\[-3.5mm] \longrightarrow\\[-3.5mm]\longrightarrow\\[-3.5mm]
\longleftarrow\\[-3.5mm]\longleftarrow \end{matrix}$ & $N^{-1}(A_1\hookrightarrow A_0)_2$\\
$\downarrow\downarrow\downarrow\uparrow\uparrow$ & & $\downarrow\downarrow\downarrow\uparrow\uparrow$ & & $\downarrow\downarrow\downarrow\uparrow\uparrow$ \\
$F^{A_1\oplus s_0(A_0)}(S^n)_3$ & $\begin{matrix}\longrightarrow\\[-3.5mm]\longrightarrow\\[-3.5mm]\longrightarrow\\[-3.5mm]\longrightarrow\\[-3.5mm]\longleftarrow\\[-3.5mm]
\longleftarrow\\[-3.5mm]\longleftarrow
\end{matrix}$ & $F^{A_1\oplus s_0(A_0)}(S^n)_2$ & $\begin{matrix}\longrightarrow\\[-3.5mm] \longrightarrow\\[-3.5mm]\longrightarrow\\[-3.5mm]
\longleftarrow\\[-3.5mm]\longleftarrow \end{matrix}$ & $A_1\oplus s_0(A_0)$\\
$\downarrow\downarrow\uparrow$ & & $\downarrow\downarrow\uparrow$ & & $\downarrow\downarrow\uparrow$ \\
$F^{A_0}(S^n)_3$ & $\begin{matrix}\longrightarrow\\[-3.5mm]\longrightarrow\\[-3.5mm]\longrightarrow\\[-3.5mm]\longrightarrow\\[-3.5mm]\longleftarrow\\[-3.5mm]
\longleftarrow\\[-3.5mm]\longleftarrow
\end{matrix}$ & $F^{A_0}(S^n)_2$ & $\begin{matrix}\longrightarrow\\[-3.5mm] \longrightarrow\\[-3.5mm]\longrightarrow\\[-3.5mm]
\longleftarrow\\[-3.5mm]\longleftarrow \end{matrix}$ & $A_0$
\end{tabular}
\end{center}
Here the $m$th horizontal simplicial group is Carlsson
construction $F^{N^{-1}(A_1\hookrightarrow A_0)_m}(S^n).$ By the
result of Quillen~\cite{Quillen}, we obtain the following spectral
sequence:
\begin{equation}\label{specseq}
E_{p,q}^2=\pi_q(\pi_p\Sigma^n K(N^{-1}(A_1\hookrightarrow
A_0),1))\Longrightarrow \pi_{p+q}\Sigma^n K(A,1).
\end{equation}
In particular, for $n$ sufficiently large, the spectral sequence
becomes
\begin{equation}\label{stabss}
E_{p,q}^2=\pi_q(\pi_p^SK(N^{-1}(A_1\hookrightarrow
A_0),1))\Longrightarrow \pi_{p+q}^SK(A,1).
\end{equation}
where $\pi_n^S$ is the $n$th stable homotopy group.

\subsection{$\pi_3^SK(A,1)$.} We now apply the above results for the description of the stable homotopy groups of $K(A,1)$ in low degrees.
Given an abelian group $A$, the homotopy functors $\pi_*^S: {\sf
Ab\to Ab},\ A\mapsto \pi_*^SK(A,1)$ can be viewed as parts of the
Whitehead exact sequence, which functorially depends on $A$.
Recall that, for $r\geq 2$,
$\pi_2^SK(A,1)=\pi_{r+2}\Sigma^rK(A,1)$ is the antisymmetric
square, and the Whitehead sequence has the form \cite{BL}:
$$
\xyma{\Gamma_{r+2}\Sigma^rK(A,1) \ar@{=}[d] \ar@{>->}[r]&
\pi_{r+2}\Sigma^rK(A,1) \ar@{=}[d]\ar@{->>}[r] & H_2K(A,1)\ar@{=}[d]\\
A\otimes \mathbb Z/2 \ar@{>->}[r]& A\tilde\otimes A\ar@{->>}[r] &
\Lambda^2(A)}
$$

Now consider the next step, the functor
$\pi_3^SK(A,1)=\pi_{r+3}\Sigma^rK(A,1)$ for $r\geq 4.$ First
consider the case of a free abelian group $A$. Observe that, for a
free abelian $A$, one has a natural isomorphism $$A\tilde\otimes
A\otimes \mathbb Z/2\simeq S^2(A)\otimes\mathbb Z/2.
$$
We have the following exact sequence
$$
\xyma{H_4(A)\ar@{=}[d] \ar@{->}[r] & \Gamma_{r+3}\Sigma^rK(A,1)
\ar@{=}[d] \ar@{->}[r] & \pi_3^SK(A,1) \ar@{=}[d] \ar@{->>}[r] & H_3(A)\ar@{=}[d] \\
\Lambda^4(A) \ar@{->}[r] & S^2(A)\otimes \mathbb Z/2 \ar@{->}[r] &
\pi_3^SK(A,1) \ar@{->>}[r] & \Lambda^3(A)}
$$
Now observe that any natural transformation $\Lambda^4(A)\to
S^2(A)\otimes \mathbb Z/2$ is zero, since, for all $n\geq 2$,
there is no non-trivial transformations between $\Lambda^n(A)$ and
any functor of degree less than $n$. Therefore, the functor
$\pi_3^S: {\sf fAb\to Ab}$ lives in the following exact sequence
\begin{equation}\label{3p}
0\to S^2(A)\otimes \mathbb Z/2\to \pi_3^SK(A,1)\to \Lambda^3(A)\to
0
\end{equation}
It follows from a simple analysis of the extensions of the cubical
$\mathbb Z$-modules which correspond to the functors $S^2\otimes
\mathbb Z/2$ and $\Lambda^3$ that any nontrivial extension between
these functors can be given by a diagram of the form
\begin{center}
\begin{tabular}{ccccccccccc}
$\mathbb Z/2$ & $\begin{matrix}\buildrel{0}\over\longrightarrow\\[-2.5mm]\buildrel{1}\over\longleftarrow
\end{matrix}$ & $\mathbb Z/2$ & $\begin{matrix}\longrightarrow\\[-3.5mm]\longrightarrow\\[-3.5mm]
\longleftarrow\\[-3.5mm]\longleftarrow \end{matrix}$ & $0$\\
$\downarrow^=$ & & $\downarrow^=$ & & $\downarrow$ \\
$\mathbb Z/2$ & $\begin{matrix}\buildrel{0}\over\longrightarrow\\[-2.5mm]\buildrel{1}\over\longleftarrow
\end{matrix}$ & $\mathbb Z/2$ & $\begin{matrix}\buildrel{0}\over\longrightarrow\\[-2.5mm]\buildrel{0}\over\longrightarrow\\[-2.5mm]
\buildrel{1}\over\longleftarrow\\[-2.5mm]\buildrel{1}\over\longleftarrow \end{matrix}$ & $\mathbb Z$\\
$\downarrow$ & & $\downarrow$ & & $\downarrow^=$ \\
$0$ &
$\begin{matrix}\longrightarrow\\[-3.5mm]\longleftarrow
\end{matrix}$ & $0$ & $\begin{matrix}\longrightarrow\\[-3.5mm]\longrightarrow\\[-3.5mm]
\longleftarrow\\[-3.5mm]\longleftarrow \end{matrix}$ & $\mathbb Z$
\end{tabular}
\end{center}
and \begin{equation}\label{ext3} Ext_f(\Lambda^3,S^2\otimes
\mathbb Z/2)=\mathbb Z/2.
\end{equation}
We will show now that the extension (\ref{3p}) presents a
non-trivial element of (\ref{ext3}).

\begin{theorem}\label{free3s}
The functor
$$
\pi_3^S: {\sf fAb\to Ab},\ A\mapsto \pi_3^SK(A,1)
$$
is given by the following cubical module:
$$
\mathbb Z/2\
\begin{matrix}\buildrel{0}\over\longrightarrow\\[-2.5mm]
\buildrel{1}\over\longleftarrow\end{matrix}\ \mathbb Z/2\
\begin{matrix}\buildrel{0}\over\longrightarrow\\[-2.5mm]\buildrel{0}\over\longrightarrow\\[-2.5mm]
\buildrel{1}\over\longleftarrow\\[-2.5mm]\buildrel{1}\over\longleftarrow \end{matrix}
\ \mathbb Z
$$
\end{theorem}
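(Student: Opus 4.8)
The plan is to follow the strategy already used for Corollary \ref{corr2}: rather than compute the crucial structure map $P_1^3\colon \mathbb Z\to\mathbb Z/2$ of the cubical $\mathbb Z$-module head-on, I would reduce the splitting question for (\ref{3p}) to the splitting of a cross-effect which is a non-trivial extension we already understand. By (\ref{3p}) and (\ref{ext3}) the functor $\pi_3^S$ is either the split extension of $\Lambda^3$ by $S^2\otimes\mathbb Z/2$ or the unique non-trivial element of $Ext_f(\Lambda^3,S^2\otimes\mathbb Z/2)=\mathbb Z/2$. The underlying groups $F_1(\mathbb Z)=\mathbb Z/2$, $F_2(\mathbb Z)=\mathbb Z/2$, $F_3(\mathbb Z)=\mathbb Z$ of the cubical module, together with the maps $H_1^2=0$, $P_1^2=1$ coming from the $S^2\otimes\mathbb Z/2$-part and $H_1^3=H_2^3=0$ (torsion into a free group), are forced; the split and the non-split modules differ only in $P_1^3=P_2^3$, which is $0$ in the split case and the reduction $\mathbb Z\to\mathbb Z/2$ in the displayed one. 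So everything comes down to proving that (\ref{3p}) does not split naturally.

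The key step is to identify the cross-effect of $\pi_3^S$ in a $\mathbb Z$-slot. First I would use the natural stable splitting of a product, $\Sigma^\infty\!\big(K(A,1)\times S^1\big)\simeq \Sigma^\infty K(A,1)\vee \Sigma^\infty S^1\vee \Sigma^\infty\big(K(A,1)\wedge S^1\big)$, together with $K(A,1)\wedge S^1=\Sigma K(A,1)$, exactly as in the proof of Lemma \ref{leq1}. Since $\pi_n^S\Sigma K(A,1)=\pi_{n-1}^S K(A,1)$ and the two retractions onto $K(A,1)$ and onto $S^1=K(\mathbb Z,1)$ kill precisely the $\Sigma K(A,1)$ summand, this yields a natural isomorphism $\pi_3^S K(A\,|\,\mathbb Z,1)\simeq \pi_2^S K(A,1)=A\tilde\otimes A$, the antisymmetric square.

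To finish I would apply the cross-effect functor $(-)(A\,|\,\mathbb Z)$, which is exact and natural, to the sequence (\ref{3p}). Using $(S^2\otimes\mathbb Z/2)(A\,|\,\mathbb Z)=A\otimes\mathbb Z/2$ and $\Lambda^3(A\,|\,\mathbb Z)=\Lambda^2(A)$ one gets the natural short exact sequence $0\to A\otimes\mathbb Z/2\to A\tilde\otimes A\to \Lambda^2(A)\to 0$. This is the defining extension of the antisymmetric square, whose quadratic $\mathbb Z$-module $\mathbb Z^{\widetilde\otimes^2}=(\mathbb Z/2\xrightarrow{0}\mathbb Z\xrightarrow{1}\mathbb Z/2)$ has $P_1^2=1\neq 0$; in particular $A\tilde\otimes A\not\simeq \Lambda^2(A)\oplus A\otimes\mathbb Z/2$ and the sequence does not split naturally. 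Were (\ref{3p}) naturally split, its image under the exact functor $(-)(A\,|\,\mathbb Z)$ would split as well, a contradiction. Hence (\ref{3p}) represents the non-trivial class of (\ref{ext3}); equivalently $P_1^3=P_2^3$ is the reduction $\mathbb Z\to\mathbb Z/2$, and $\pi_3^S$ is given by the displayed cubical $\mathbb Z$-module.

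The main obstacle is the middle step, the natural identification $\pi_3^S K(A\,|\,\mathbb Z,1)\simeq A\tilde\otimes A$: one must check that the stable splitting of the product is natural in $A$, that the cross-effect isolates exactly the suspended summand $\Sigma K(A,1)$, and that the induced sub- and quotient-functors in the cross-effect of (\ref{3p}) are indeed $A\otimes\mathbb Z/2$ and $\Lambda^2(A)$, so that the middle term carries its standard antisymmetric-square structure. Once this naturality is in place --- entirely parallel to Lemma \ref{leq1} --- the non-splitting of $\widetilde\otimes^2$ recorded among the quadratic $\mathbb Z$-modules does the rest.
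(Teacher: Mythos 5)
Your argument is correct, and it takes a genuinely different route from the paper's. The paper argues by contradiction through non-free groups: assuming (\ref{3p}) splits, it feeds the splitting into the spectral sequence (\ref{stabss}) to get the extension (\ref{qmq}) for $B=A\otimes \mathbb Z/2$, identifies $L_1\tilde\otimes^2(A\otimes\mathbb Z/2)\simeq \Gamma_2(A\otimes\mathbb Z/2)$, uses Liulevicius' computation $\pi_3^SK(\mathbb Z/2,1)=\mathbb Z/8$ to pin down the cubical $\mathbb Z$-module of $\pi_3^SK(A\otimes\mathbb Z/2,1)$, and then shows the Hurewicz surjection onto $H_3(A\otimes\mathbb Z/2)$ cannot be realized by a map of cubical $\mathbb Z$-modules. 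You instead stay entirely on ${\sf fAb}$ and evaluate the cross-effect in a $\mathbb Z$-slot: the natural stable splitting of $\Sigma\bigl(K(A,1)\times S^1\bigr)$ gives $\pi_3^SK(A\,|\,\mathbb Z,1)\simeq \pi_2^SK(A,1)=A\tilde\otimes A$, so a natural splitting of (\ref{3p}) would force $A\tilde\otimes A\simeq \Lambda^2(A)\oplus A\otimes\mathbb Z/2$, which is ruled out because $P_1^2=1\neq 0$ in $\mathbb Z^{\tilde\otimes^2}$. The naturality you flag as the main obstacle is standard and is exactly parallel to the moves the paper itself makes in Lemma \ref{leq1} and in Proposition \ref{teq} (where $H_n(A\,|\,\mathbb Z)\simeq H_{n-1}(A)$ plays the same role); your reduction of $P_1^3=P_2^3$ to the single splitting question also matches the paper's prior computation (\ref{ext3}). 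Your route is shorter and avoids all non-free input (the spectral sequence, $\mathbb Z/8$, and the analysis of $H_3(A\otimes\mathbb Z/2)$); what the paper's longer route buys is the explicit description of $\pi_3^SK(A\otimes\mathbb Z/2,1)$ and the diagram (\ref{3dia3}), which are reused in section \ref{fourthsec}.
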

\begin{proof}
Assume that, for a free abelian group $A$, the functor
$\pi_3^SK(A,1)$ presents the zero element in (\ref{ext3}), i.e.
$\pi_3^SK(A,1)=S^2(A)\otimes \mathbb Z/2\oplus\Lambda^3(A)$, and
let $B$ be a non-free abelian group. The spectral sequence
(\ref{stabss}) implies that there is a natural exact sequence
\begin{equation}\label{lse3}
0\to S^2(B)\otimes \mathbb Z/2\oplus \Lambda^3(B)\to
\pi_3^SK(B,1)\to L_1\tilde\otimes^2(B)\to 0
\end{equation}
Consider now the functor
$$
\pi_3^SK(-\otimes \mathbb Z/2,1):\ {\sf fAb\to Ab},\ \ A\mapsto
\pi_3^SK(A\otimes \mathbb Z/2,1)
$$
There is the following short exact sequence (see
\cite{MikhailovWu}): $$0\to A\otimes \mathbb Z/2\to
L_1\tilde\otimes^2(A\otimes \mathbb Z/2)\to \Gamma_2(A)\otimes
\mathbb Z/2\to 0$$ and $L_1\tilde\otimes^2(\mathbb Z/2)=\mathbb
Z/4$. Hence $L_1\tilde\otimes^2(A\otimes \mathbb Z/2)$ describes a
nontrivial element of $$Ext(\Gamma_2(A)\otimes \mathbb
Z/2,A\otimes \mathbb Z/2)=\mathbb Z/2$$ and, therefore,
$$
L_1\tilde\otimes^2(A\otimes \mathbb Z/2)\simeq \Gamma_2(A\otimes
\mathbb Z/2).
$$
Therefore, the sequence (\ref{lse3}) can be rewritten for
$B=A\otimes \mathbb Z/2$ as
\begin{equation}\label{qmq} 0\to S^2(A)\otimes \mathbb Z/2\oplus
\Lambda^3(A)\otimes \mathbb Z/2\to \pi_3^SK(A\otimes \mathbb
Z/2,1)\to \Gamma_2(A\otimes \mathbb Z/2)\to 0
\end{equation}
We know from \cite{Liule} that $\pi_3^SK(\mathbb Z/2,1)=\mathbb
Z/8$. The diagram  of cubical $\mathbb Z$-modules which correspond
to the extension (\ref{qmq}) has the following form
\begin{center}
\begin{tabular}{ccccccccccc}
$\mathbb Z/2$ & $\begin{matrix}\buildrel{0}\over\longrightarrow\\[-2.5mm]\buildrel{1}\over\longleftarrow
\end{matrix}$ & $\mathbb Z/2$ & $\begin{matrix}\buildrel{0}\over\longrightarrow\\[-2.5mm]\buildrel{0}\over\longrightarrow\\[-2.5mm]
\buildrel{0}\over\longleftarrow\\[-2.5mm]\buildrel{0}\over\longleftarrow \end{matrix}$ & $\mathbb Z/2$\\
$\downarrow$ & & $\downarrow$ & & $\downarrow^=$ \\
$\mathbb Z/8$ & $\begin{matrix}\buildrel{1}\over\longrightarrow\\[-2.5mm]\buildrel{2}\over\longleftarrow
\end{matrix}$ & $\mathbb Z/4$ & $\begin{matrix}\buildrel{0}\over\longrightarrow\\[-2.5mm]\buildrel{0}\over\longrightarrow\\[-2.5mm]
\buildrel{0}\over\longleftarrow\\[-2.5mm]\buildrel{0}\over\longleftarrow \end{matrix}$ & $\mathbb Z/2$\\
$\downarrow$ & & $\downarrow$ & & $\downarrow$ \\
$\mathbb Z/4$ &
$\begin{matrix}\buildrel{1}\over\longrightarrow\\[-2.5mm]\buildrel{2}\over\longleftarrow
\end{matrix}$ & $\mathbb Z/2$ & $\begin{matrix}\longrightarrow\\[-3.5mm]\longrightarrow\\[-3.5mm]
\longleftarrow\\[-3.5mm]\longleftarrow \end{matrix}$ & $0$
\end{tabular}
\end{center}
One verifies that the above extension is unique and therefore,
$$
\pi_3^S(A\otimes \mathbb Z/2,1)\simeq \Gamma_2(A\otimes \mathbb
Z/4)\oplus \Lambda^3(A)\otimes \mathbb Z/2
$$
(see (\ref{mz2})). Observe also that the Whitehead sequence
implies that the Hurewicz map
$$
\pi_3^SK(A\otimes \mathbb Z/2,1)\to H_3(A\otimes \mathbb Z/2)
$$
is a natural surjection, which induces isomorphism on the triple
cross-effects. However, it is not possible to construct a
commutative diagram of the form
\begin{center}
\begin{tabular}{ccccccccccc}
$\mathbb Z/8$ & $\begin{matrix}\buildrel{1}\over\longrightarrow\\[-2.5mm]\buildrel{2}\over\longleftarrow
\end{matrix}$ & $\mathbb Z/4$ & $\begin{matrix}\buildrel{0}\over\longrightarrow\\[-2.5mm]\buildrel{0}\over\longrightarrow\\[-2.5mm]
\buildrel{0}\over\longleftarrow\\[-2.5mm]\buildrel{0}\over\longleftarrow \end{matrix}$ & $\mathbb Z/2$\\
$\downarrow$ & & $\downarrow$ & & $\downarrow^\simeq$ \\
$\mathbb Z/2$ & $\begin{matrix}\buildrel{1}\over\longrightarrow\\[-2.5mm]\buildrel{0}\over\longleftarrow
\end{matrix}$ & $\mathbb Z/2$ & $\begin{matrix}\buildrel{1}\over\longrightarrow\\[-2.5mm]\buildrel{1}\over\longrightarrow\\[-2.5mm]
\buildrel{0}\over\longleftarrow\\[-2.5mm]\buildrel{0}\over\longleftarrow \end{matrix}$ & $\mathbb Z/2$\\
\end{tabular}
\end{center}
This gives a contradiction. Therefore, the functor $\pi_3^SK(A,1)$
describes a non-trivial element of (\ref{ext3}).
\end{proof}

Theorem \ref{free3s} implies that the functor $\pi_3^SK(-\mathbb
Z/2,1):\ \sf fAb\to Ab$ is represented by the cubical $\mathbb
Z$-module
$$
\mathbb Z/8\
\begin{matrix}\buildrel{1}\over\longrightarrow\\[-2.5mm]
\buildrel{2}\over\longleftarrow\end{matrix}\ \mathbb Z/4\
\begin{matrix}\buildrel{1}\over\longrightarrow\\[-2.5mm]\buildrel{1}\over\longrightarrow\\[-2.5mm]
\buildrel{2}\over\longleftarrow\\[-2.5mm]\buildrel{2}\over\longleftarrow \end{matrix}
\ \mathbb Z/2
$$
The portion of the Whitehead sequence which contains the natural
transformation $\pi_3^S\to H_3$ has the following form
\begin{equation}\label{3dia3}
\xyma{S^2(A)\otimes \mathbb Z/2 \ar@{>->}[r] \ar@{>->}[d] &
F(A)\ar@{>->}[d] \ar@{->>}[r]
& \Lambda^3(A)\otimes \mathbb Z/2\ar@{>->}[d] \\
\Gamma_2(A\otimes \mathbb Z/2) \ar@{>->}[r] \ar@{->>}[d]&
\pi_3^SK(A\otimes
\mathbb Z/2, 1) \ar@{->>}[r] \ar@{->>}[d]& H_3(A\otimes \mathbb Z/2)\ar@{->>}[d]\\
A\otimes \mathbb Z/2 \ar@{>->}[r] & \Gamma_2(A\otimes \mathbb
Z/2)\ar@{->>}[r] & \Gamma_2(A)\otimes \mathbb Z/2} \end{equation}
where the functor $F: \sf fAb\to Ab$ is given by the cubical
$\mathbb Z$-module
$$
\mathbb Z/2\
\begin{matrix}\buildrel{0}\over\longrightarrow\\[-2.5mm]
\buildrel{1}\over\longleftarrow\end{matrix}\ \mathbb Z/2\
\begin{matrix}\buildrel{0}\over\longrightarrow\\[-2.5mm]\buildrel{0}\over\longrightarrow\\[-2.5mm]
\buildrel{1}\over\longleftarrow\\[-2.5mm]\buildrel{1}\over\longleftarrow \end{matrix}
\ \mathbb Z/2
$$
The spectral sequence (\ref{stabss}) therefore implies the
following
\begin{prop}
For an abelian group $A$, there is a natural exact sequence
$$
0\to L_0F(A)\to \pi_3^SK(A,1)\to L_1\tilde\otimes^2(A)\to 0
$$
which does not split.
\end{prop}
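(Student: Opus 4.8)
The plan is to extract the short exact sequence directly from the spectral sequence (\ref{stabss}) and then to expose the non-splitting by specializing to $A=\mathbb Z/2$, where all three terms are already known.

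First I would observe that, applying the functors $\pi_p^SK(-,1)\colon {\sf fAb}\to{\sf Ab}$ levelwise to the free simplicial resolution $N^{-1}(A_1\hookrightarrow A_0)$ of $A$ and taking simplicial homotopy computes the Dold--Puppe derived functors, so that the $E^2$-page of (\ref{stabss}) reads $E^2_{p,q}=L_q(\pi_p^SK(-,1))(A)$. The relevant low-degree functors are already at hand: $\pi_0^SK(-,1)$ vanishes, $\pi_1^SK(-,1)$ is the identity functor (hence exact, so its higher derived functors vanish), $\pi_2^SK(-,1)=\tilde\otimes^2$, and the degree-three functor is the one computed in Theorem \ref{free3s}, which contributes through its zeroth derived functor the term $L_0F(A)$ for the functor $F$ of the displayed cubical $\mathbb Z$-module. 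Collecting the contributions with $p+q=3$, the only possibly non-zero entries are $E^2_{3,0}=L_0F(A)$ and $E^2_{2,1}=L_1\tilde\otimes^2(A)$, since $E^2_{1,2}=L_2(\mathrm{id})(A)=0$ and $E^2_{0,3}=0$.

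Next I would verify that these two entries survive to $E^\infty$. No differential can leave $E^2_{3,0}$, as its potential targets $E^2_{1,1}=L_1(\mathrm{id})(A)$ and $E^2_{0,2}$ vanish, and nothing maps into it for degree reasons; thus $E^\infty_{3,0}=L_0F(A)$. The only differential that could affect $E^2_{2,1}$ is $d_2\colon E^2_{4,0}\to E^2_{2,1}$, and the heart of this step is to prove it is zero, so that $E^\infty_{2,1}=L_1\tilde\otimes^2(A)$; I would establish this by naturality together with evaluation at test groups on which the abutment is already pinned down (free groups via Theorem \ref{free3s}, and $\mathbb Z/2$ via $\pi_3^SK(\mathbb Z/2,1)=\mathbb Z/8$). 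The resulting two-step filtration of $\pi_3^SK(A,1)$ then yields the asserted natural short exact sequence $0\to L_0F(A)\to\pi_3^SK(A,1)\to L_1\tilde\otimes^2(A)\to 0$.

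Finally, for the non-splitting I would specialize to $A=\mathbb Z/2$. Here $\pi_3^SK(\mathbb Z/2,1)=\mathbb Z/8$ from \cite{Liule}, while $L_1\tilde\otimes^2(\mathbb Z/2)=\mathbb Z/4$ has already been computed; the sequence therefore becomes an extension $0\to L_0F(\mathbb Z/2)\to\mathbb Z/8\to\mathbb Z/4\to 0$ presenting $\mathbb Z/8$ as an extension with non-trivial proper quotient $\mathbb Z/4$. Since $\mathbb Z/8$ is indecomposable, this extension cannot split, and a fortiori the natural sequence of the proposition does not split. I expect the vanishing of the differential $d_2\colon E^2_{4,0}\to E^2_{2,1}$, together with the precise identification of the surviving $(3,0)$-term with $L_0F$, to be the main obstacle; once the spectral sequence is seen to degenerate in this range, the non-splitting is immediate from the indecomposability of $\mathbb Z/8$.
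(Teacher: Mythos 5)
Your proposal is correct and follows essentially the same route as the paper, which states this proposition as an immediate consequence of the spectral sequence (\ref{stabss}) (whose degeneration in total degree $3$, i.e.\ the vanishing of $d_2\colon E^2_{4,0}\to E^2_{2,1}$, it likewise leaves implicit) together with the computations $\pi_3^SK(\mathbb Z/2,1)=\mathbb Z/8$ and $L_1\tilde\otimes^2(\mathbb Z/2)=\mathbb Z/4$ already made in the proof of Theorem \ref{free3s}, the non-splitting via the indecomposability of $\mathbb Z/8$ being exactly the intended argument. Note only that your reading of $F$ as the functor $\pi_3^SK(-,1)|_{\sf fAb}$ of Theorem \ref{free3s} (rather than the functor $F$ of diagram (\ref{3dia3}), whose cubical $\mathbb Z$-module ends in $\mathbb Z/2$) is the one that makes the statement consistent with the free case, so your identification of $E^2_{3,0}$ is the right one.
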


\vspace{.5cm}
\section{The fourth stable homotopy group of
$K(A,1)$}\label{fourthsec} \vspace{.5cm} Consider first the case
of a free abelian group $A$. We have the following diagram
$$
\xyma{H_5(A)\ar@{=}[d] \ar@{->}[r] & \Gamma_{r+4}\Sigma^rK(A,1)
\ar@{=}[d] \ar@{->}[r] & \pi_4^SK(A,1) \ar@{=}[d] \ar@{->>}[r] & H_4(A)\ar@{=}[d] \\
\Lambda^5(A) \ar@{->}[r] & A\otimes \mathbb Z/3\oplus P
\ar@{->}[r] & \pi_4^SK(A,1) \ar@{->>}[r] & \Lambda^4(A)}
$$
where the term $P$ was defined in (\ref{pushout}). The functor
$A\otimes \mathbb Z/3\oplus P$ is cubical. The natural
transformation $\Lambda^5(A)\to A\otimes \mathbb Z/3\oplus P$ is
zero and we have a natural short exact sequence
\begin{equation}\label{waw2}
0\to A\otimes \mathbb Z/3\oplus P\to \pi_4^SK(A,1)\to
\Lambda^4(A)\to 0.
\end{equation}
Here, by (\ref{pushout}), the functor $P$ is given by the pushout
diagram
$$
\xyma{A\otimes \mathbb Z/2 \ar@{->}[d]^{A\otimes 4} \ar@{->}[r] &
\pi_3^SK(A,1)\ar@{->}[d]\\ A\otimes \mathbb Z/8 \ar@{->}[r] & P}
$$
It follows that the functor $P$ can be descibed by the cubical
$\mathbb Z$-module:
$$
\mathbb Z/8\
\begin{matrix}\buildrel{0}\over\longrightarrow\\[-2.5mm]
\buildrel{4}\over\longleftarrow\end{matrix}\ \mathbb Z/2\
\begin{matrix}\buildrel{0}\over\longrightarrow\\[-2.5mm]\buildrel{0}\over\longrightarrow\\[-2.5mm]
\buildrel{1}\over\longleftarrow\\[-2.5mm]\buildrel{1}\over\longleftarrow \end{matrix}
\ \mathbb Z/2
$$

\begin{theorem}\label{free4s} The functor
$$
\pi_4^S: {\sf fAb\to Ab},\ A\mapsto \pi_4^SK(A,1)
$$
is described by the following quartic $\mathbb Z$-module
\begin{equation}\label{vm4}
\mathbb Z/24\
\begin{matrix}\buildrel{0}\over\longrightarrow\\[-2.5mm]
\buildrel{12}\over\longleftarrow\end{matrix}\ \mathbb Z/2\
\begin{matrix}\buildrel{0}\over\longrightarrow\\[-2.5mm]\buildrel{0}\over\longrightarrow\\[-2.5mm]
\buildrel{1}\over\longleftarrow\\[-2.5mm]\buildrel{1}\over\longleftarrow \end{matrix}
\ \mathbb Z/2\
\begin{matrix}\buildrel{0}\over\longrightarrow\\[-2.5mm]\buildrel{0}\over\longrightarrow\\[-2.5mm]\buildrel{0}\over\longrightarrow\\[-2.5mm]
\buildrel{1}\over\longleftarrow\\[-2.5mm]
\buildrel{1}\over\longleftarrow\\[-2.5mm]\buildrel{1}\over\longleftarrow
\end{matrix}\ \mathbb Z
\end{equation}
\end{theorem}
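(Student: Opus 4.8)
The plan is to build the quartic $\mathbb{Z}$-module of $\pi_4^S$ out of the short exact sequence (\ref{waw2}) together with the cubical $\mathbb{Z}$-module of $A\otimes\mathbb{Z}/3\oplus P$ computed just above it, and then to determine the three structure maps $P_1^4,P_2^4,P_3^4$ joining the top cross-effect to the one below it. First I would apply to (\ref{waw2}) the $n$-th cross-effect at $\mathbb{Z}$, $F\mapsto F(\mathbb{Z}|\dots|\mathbb{Z})$, which is exact since each cross-effect is a direct summand. Because $A\otimes\mathbb{Z}/3\oplus P$ has degree $3$ and the only non-vanishing cross-effect of $\Lambda^4$ at $\mathbb{Z}$ is the fourth one, $\Lambda^4(\mathbb{Z}|\mathbb{Z}|\mathbb{Z}|\mathbb{Z})=\mathbb{Z}$ (the lower ones vanish since $\Lambda^4(\mathbb{Z}^k)=0$ for $k\leq 3$), the sequence breaks up level by level and gives $F_1=\mathbb{Z}/24$, $F_2=\mathbb{Z}/2$, $F_3=\mathbb{Z}/2$, $F_4=\mathbb{Z}$. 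The value $F_1=\pi_4^SK(\mathbb{Z},1)=\pi_3^S=\mathbb{Z}/24$ serves as an independent check.

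Next I would read off the maps. Those among $F_1,F_2,F_3$ come from the cubical module of $A\otimes\mathbb{Z}/3\oplus P$: since $A\otimes\mathbb{Z}/3$ is linear and contributes only to $F_1$, we get $H_1^2=0$, while $P_1^2$ is the map $4\colon\mathbb{Z}/2\to\mathbb{Z}/8$ coming from $P$, which under $\mathbb{Z}/3\oplus\mathbb{Z}/8\cong\mathbb{Z}/24$ becomes multiplication by $12$; between $F_2$ and $F_3$ the $H$-maps are $0$ and the $P$-maps are $1$. The maps $H_i^4\colon F_3=\mathbb{Z}/2\to F_4=\mathbb{Z}$ vanish for lack of non-zero homomorphisms. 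Thus the module is pinned down except for $P_1^4,P_2^4,P_3^4\in\hom(\mathbb{Z},\mathbb{Z}/2)=\mathbb{Z}/2$, and the theorem reduces to showing each equals $1$, that is, that the extension (\ref{waw2}) does not split.

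The hard part is precisely this non-splitting, since the quartic relations are compatible with both $P_i^4=0$ and $P_i^4=1$ (the composites forced to appear all involve $H_i^4=0$ or land in $2$-torsion, so they impose no constraint). I would first verify, by the same manipulation of $\mathbb{Z}$-module relations used in the earlier $Ext$-computations, that $Ext_f(\Lambda^4,A\otimes\mathbb{Z}/3\oplus P)=\mathbb{Z}/2$, with the three maps $P_i^4$ forced to agree, so the only candidates are the split module and the module (\ref{vm4}). To exclude the split case I would argue as in Theorem \ref{free3s}: a splitting $\pi_4^SK(A,1)\cong A\otimes\mathbb{Z}/3\oplus P\oplus\Lambda^4(A)$ on ${\sf fAb}$ would, through the spectral sequence (\ref{stabss}), propagate to non-free groups and in particular determine $\pi_4^SK(\mathbb{Z}/2,1)$ together with the Hurewicz map $\pi_4^S\to H_4$; comparison with the independently known value of $\pi_4^SK(\mathbb{Z}/2,1)=\pi_4^S\mathbb{RP}^\infty$ then yields a contradiction and forces $P_i^4=1$. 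The main obstacle is making this comparison sharp enough to detect the single factor of $\mathbb{Z}/2$ that separates the split module from (\ref{vm4}).
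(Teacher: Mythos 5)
Your setup is sound and matches the paper's: the cross-effect computation giving $F_1=\mathbb Z/24$, $F_2=F_3=\mathbb Z/2$, $F_4=\mathbb Z$, the identification of the lower structure maps from the cubical module of $-\otimes\mathbb Z/3\oplus P$, the vanishing of the $H_i^4$, and the reduction of the theorem to (i) $Ext_f(\Lambda^4,-\otimes\mathbb Z/3\oplus P)=\mathbb Z/2$ and (ii) the non-splitting of (\ref{waw2}) are all exactly what the paper does. But the non-splitting is the entire content of the theorem, and there you stop at an acknowledged ``main obstacle'' rather than a proof, so the proposal has a genuine gap at its decisive step. Moreover, the route you sketch --- propagating a hypothetical splitting to $A\otimes\mathbb Z/2$ and then comparing with the single group $\pi_4^SK(\mathbb Z/2,1)=\pi_4^S\mathbb{RP}^\infty$ --- is unlikely to close on its own: a single abelian-group value at $A=\mathbb Z$ generally cannot distinguish a split from a non-split natural extension (both candidates can have isomorphic underlying groups), and even in the $\pi_3^S$ case the value $\mathbb Z/8$ was only effective in combination with the full cubical $\mathbb Z$-module structure and the Hurewicz map. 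For $\pi_4^S$ the analogous direct computation would require controlling $L_1$ of quartic functors in the spectral sequence (\ref{stabss}), which is considerably harder than what you indicate.

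The paper's actual mechanism, which is the missing idea, bypasses any numerical knowledge of $\pi_4^SK(\mathbb Z/2,1)$. Assuming (\ref{waw2}) splits, the spectral sequence (\ref{stabss}) and the vanishing of $Hom(\Lambda^4\otimes\mathbb Z/2,G)$ and $Hom(G,\Lambda^4\otimes\mathbb Z/2)$ for cubical $G$ give $\pi_4^SK(A\otimes\mathbb Z/2,1)\simeq Q\oplus\Lambda^4(A)\otimes\mathbb Z/2$ with $Q$ cubical; since $\Gamma_{r+4}\Sigma^rK(A\otimes\mathbb Z/2,1)$ is cubical by (\ref{stranges}) and the Hurewicz map is onto by (\ref{3dia3}), the summand $\Lambda^4(A)\otimes\mathbb Z/2$ splits off $H_4(A\otimes\mathbb Z/2)$, i.e.\ the sequence (\ref{h4s}) splits. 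Taking the cross-effect with $\mathbb Z$ and using $H_4(A\otimes\mathbb Z/2\,|\,\mathbb Z)\simeq H_3(A\otimes\mathbb Z/2)$ would then split
$$
0\to\Lambda^3(A)\otimes\mathbb Z/2\to H_3(A\otimes\mathbb Z/2)\to\Gamma_2(A)\otimes\mathbb Z/2\to 0,
$$
contradicting (\ref{nonsplitting3}). You would need to supply this (or an equivalent) reduction to make the argument complete.
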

\begin{proof}
The proof is similar to that of theorem \ref{free3s}. Since the
quartic $\mathbb Z$-module which corresponds to the functor
$\Lambda^4$ has a simple form, it is easy to see that
\begin{equation}\label{ext5}
Ext_f(\Lambda^4,-\otimes \mathbb Z/3\oplus P)=\mathbb Z/2
\end{equation}
and a nontrivial element in (\ref{ext5}) is given by the quartic
$\mathbb Z$-module (\ref{vm4}). It remains to show that the
sequence (\ref{waw2}) does not split naturally.

Let us assume that the sequence (\ref{waw2}) does split. Recall
that
$$Hom(\Lambda^4\otimes \mathbb Z/2, G)=Hom(G,\Lambda^4\otimes
\mathbb Z/2)=0$$ for every cubical functor $G$. The spectral
sequence (\ref{stabss}) implies that the functor
$$
\pi_4^S(-\otimes\mathbb Z/2,1):\ {\sf fAb\to Ab},\ \ A\mapsto
\pi_4^S(A\otimes \mathbb Z/2,1)
$$
can be represented as a direct sum
$$
\pi_4^S(A\otimes \mathbb Z/2,1)\simeq Q\oplus \Lambda^4(A)\otimes
\mathbb Z/2
$$
for some cubical functor $Q: \sf fAb\to Ab$. The description
(\ref{stranges}) of $\Gamma_{r+4}\Sigma^4K(A\otimes \mathbb
Z/2,1)$ implies that it is a cubical functor for all $r$. It
follows that the image of the Hurewicz map
$$\pi_4^SK(A\otimes \mathbb Z/2,1)\to H_4(A\otimes \mathbb Z/2)$$
also has the form $\bar Q\oplus \Lambda^4(A)\otimes \mathbb Z/2$
for some cubical functor $\bar Q$. The diagram (\ref{3dia3})
implies that the Hurewicz map is an epimorphism, hence we obtain
that there is a natural isomorphism
$$
H_4(A\otimes \mathbb Z/2)\simeq \bar Q\oplus \Lambda^4(A)\otimes
\mathbb Z/2.
$$
Since we know that there exists a natural exact sequence (see
\cite{Breen})
\begin{equation}\label{h4s}
0 \to \Lambda^4(A)\otimes \mathbb Z/2\to H_4(A\otimes \mathbb
Z/2)\to L_1\Lambda^3(A\otimes \mathbb Z/2)\to 0,
\end{equation}
we have an isomorphism $\bar Q\simeq L_1\Lambda^3(A\otimes \mathbb
Z/2)$ and the sequence (\ref{h4s}) splits. To prove this
rigorously, one considers a quartic $\mathbb Z$-module associated
to $\bar Q\oplus \Lambda^4(A)\otimes \mathbb Z/2$ and compares it
with any possible extension of type (\ref{h4s}). It remains to
observe that
$$
H_4(A\otimes \mathbb Z/2|\ \mathbb Z)\simeq H_3(A\otimes \mathbb
Z/2)
$$
so that splitting of (\ref{h4s}) implies the splitting of
$$
0\to \Lambda^3(A)\otimes \mathbb Z/2\to H_3(A\otimes \mathbb
Z/2)\to \Gamma_2(A)\otimes \mathbb Z/2\to 0.
$$
However, by (\ref{nonsplitting3}), this is not possible.
\end{proof}
\vspace{.5cm}
\section{The Splitting of the derived functors}
\vspace{.5cm}
\subsection{Derived functors.}
Let $A$ be an abelian group, and $F$  an endofunctor on the
category of abelian groups. Recall that for every  $n\geq 0$ the
derived functor of $F$
 in the sense
of Dold-Puppe  \cite{DoldPuppe} are defined by
$$
L_iF(A,n)=\pi_i(FKP_\ast[n]),\ i\geq 0
$$
where $P_\ast \to A$ is a projective resolution of $A$, and
 $K$ is  the Dold-Kan transform,  the inverse to the Moore normalization  functor
\[
N:  \mathrm{Simpl}({\sf Ab}) \to \mathrm{Chain}({\sf Ab})
\]
from simplicial abelian groups to chain complexes. We denote by
$LF(A,n)$ the object in the homotopy category of simplicial
abelian groups determined by the simplicial abelian group
$FK(P_\ast[n])$, so that
\[ L_iF(A,n) = \pi_i(LF(A,n))\,.\]
  We
set $LF(A) :=LF(A,0)$ and  $L_iF(A):= L_iF(A,0)$ for any $\ i\geq
0$.

\subsection{The splitting of the derived functors of $\Gamma_2$.} The natural exact sequence
$$
0\to S^2(A)\to \Gamma_2(A)\to A\otimes \mathbb Z/2\to 0
$$
implies that, for an object $C\in \sf D\sf Ab_{\leq 0}$, one has a
distinguished triangle
\begin{equation}\label{tri}
LS^2(C)\to L\Gamma_2(C)\to C\lotimes \mathbb Z/2\to LSP^2(C)[1]
\end{equation}

\begin{theorem}\label{the1}
For any $C\in \sf D\sf Ab_{\leq 0},$ there are natural
isomorphisms
\begin{equation}\label{iso1} \pi_i(L\Gamma_2(C[1]))\simeq
\pi_i(LS^2(C[1]))\oplus \pi_i\left(C\lotimes \mathbb Z/2[1]\right)
\end{equation}
for all $i\geq 0$.
\end{theorem}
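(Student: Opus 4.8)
The plan is to feed the shifted object $C[1]$ into the distinguished triangle (\ref{tri}). Using $C[1]\lotimes\mathbb Z/2\simeq(C\lotimes\mathbb Z/2)[1]$, this triangle reads
\[
LS^2(C[1])\to L\Gamma_2(C[1])\to(C\lotimes\mathbb Z/2)[1]\to LS^2(C[1])[1],
\]
and I would read off the associated long exact sequence of homotopy groups. To obtain (\ref{iso1}) it then suffices to prove two things: first, that every connecting homomorphism $\partial\colon\pi_i((C\lotimes\mathbb Z/2)[1])\to\pi_{i-1}(LS^2(C[1]))$ vanishes, so that the long exact sequence falls apart into short exact sequences natural in $C$; and second, that these short exact sequences split naturally.

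For the vanishing of $\partial$ I would combine two inputs. Since $S^2$ is quadratic, $LS^2$ raises connectivity: for an abelian group $B$ placed in degree $d$ the object $LS^2(B[d])$ is concentrated in homotopical degrees $\ge 2d$, whereas $(B\lotimes\mathbb Z/2)[d]$ lives only in degrees $d$ and $d+1$. Using the theorem of Dold \cite{Dold}—that over $\mathbb Z$ every object of $\sf DAb_{\le 0}$ splits non-canonically as $\bigoplus_m\pi_m(C)[m]$—together with the additivity $\Gamma_2(X\oplus Y)\simeq\Gamma_2(X)\oplus(X\lotimes Y)\oplus\Gamma_2(Y)$ and the identical relation for $S^2$, I can reduce $\partial$ to its diagonal pieces indexed by the generators $\pi_m(C)[m+1]$; the off-diagonal cross terms $\pi_m(C)[m+1]\lotimes\pi_{m'}(C)[m'+1]$ contribute equally to $LS^2(C[1])$ and $L\Gamma_2(C[1])$, and the comparison map is an isomorphism on them because $S^2\hookrightarrow\Gamma_2$ is an isomorphism on second cross-effects. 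On each diagonal piece ($d=m+1\ge 1$) the source of $\partial$ sits in degrees $\le d+1$ and its target in degrees $\ge 2d$, which are disjoint for every $d\ge 1$. Hence $\partial$ is pointwise zero, and being a natural transformation it vanishes identically, leaving the natural short exact sequences
\[
0\to\pi_i(LS^2(C[1]))\to\pi_i(L\Gamma_2(C[1]))\to\pi_i((C\lotimes\mathbb Z/2)[1])\to0.
\]

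The same connectivity bookkeeping, applied pointwise through Dold's decomposition, shows that the two outer terms of these sequences occupy disjoint homotopical degrees in every case except one: the diagonal contribution of $B=\pi_0(C)$ in homotopy degree $i=2$, where one is left with the single functorial extension $0\to L_2S^2(B,1)\to L_2\Gamma_2(B,1)\to\mathrm{Tor}(B,\mathbb Z/2)\to0$. Promoting the pointwise splitting to a natural one is the substantive step. I would carry it out with the polynomial $\mathbb Z$-module calculus of Section 2: identify the quadratic $\mathbb Z$-module attached to $L_2\Gamma_2(-,1)$, compare it with every possible extension of $\mathrm{Tor}(-,\mathbb Z/2)$ by $L_2S^2(-,1)$, and check that the corresponding $Ext$-group of functors carries no class realizing a nonsplit extension—equivalently, produce a natural splitting directly, using the derived map $Lg\colon L\Gamma_2\to LS^2$ coming from $g\colon\gamma_2(a)\mapsto a^2$ as a candidate retraction. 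Reassembling over all $i$ and all summands then yields the natural isomorphism (\ref{iso1}).

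I expect the genuine difficulty to be concentrated entirely in this last functorial extension. The rest of the paper shows that extensions between functors of this type are routinely nontrivial—indeed, before shifting, the very sequence $0\to S^2\to\Gamma_2\to -\otimes\mathbb Z/2\to0$ is the nonzero generator of $Ext_f(-\otimes\mathbb Z/2,S^2)=\mathbb Z/2$ and does \emph{not} split—so the crux is to verify that passing to $C[1]$ destroys the relevant extension class, while everything else follows formally from the triangle, the connectivity of $LS^2$, and Dold's splitting.
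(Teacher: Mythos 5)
Your first step is sound: the connecting homomorphisms $\pi_i\left(C\lotimes\mathbb Z/2[1]\right)\to\pi_{i-1}(LS^2(C[1]))$ are natural transformations, so to prove they vanish it is legitimate to evaluate at each object $C$ using Dold's non-natural decomposition $C\simeq\bigoplus_m\pi_m(C)[m]$, the cross-effect splitting of $S^2$ and $\Gamma_2$, and the connectivity bound $\pi_i(LS^2(B[d]))=0$ for $i<2d$. That gives the natural short exact sequences by a cleaner route than the paper, which establishes short exactness degree by degree along the way.

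The gap is in the splitting step. A short exact sequence of functors that splits at every object need not split naturally --- this is the leitmotif of the entire paper, and the proposition immediately following theorem \ref{the1} records precisely that Dold's decomposition is \emph{not} functorial here. Your reduction to ``the single exceptional extension $0\to L_2S^2(B,1)\to L_2\Gamma_2(B,1)\to\Tor(B,\mathbb Z/2)\to 0$'' is performed inside that non-natural decomposition: the statement that the two outer terms occupy disjoint degrees holds only summand by summand, and for $i\geq 3$ both $\pi_i(LS^2(C[1]))$ and $\pi_i\left(C\lotimes\mathbb Z/2[1]\right)$ are simultaneously nonzero for suitable $C$ (e.g.\ $i=4$ with $\pi_1(C)=\mathbb Z$, $\pi_3(C)=\mathbb Z/2$). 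So even a fully natural treatment of the exceptional piece would reassemble only to a pointwise isomorphism in (\ref{iso1}); you have produced no natural section. The paper supplies the missing natural maps in two ways: for $i=2$, lemma \ref{lem2} shows the suspension $\pi_1(LS^2(C))\to\pi_2(LS^2(C[1]))$ vanishes, so the suspension on $\pi_1(L\Gamma_2(C))$ factors through $\pi_1\left(C\lotimes\mathbb Z/2\right)=\pi_2\left(C\lotimes\mathbb Z/2[1]\right)$ and yields a natural section; for $i=j+1\geq 3$ it replaces $C$ by the truncation $Z\simeq\tau_{\geq j-1}C$, for which $\pi_{j+1}(LS^2(Z[1]))=0$ forces $\pi_{j+1}(L\Gamma_2(Z[1]))\simeq\pi_{j+1}\left(Z\lotimes\mathbb Z/2[1]\right)=\pi_{j+1}\left(C\lotimes\mathbb Z/2[1]\right)$, and composing the inverse with $\pi_{j+1}(L\Gamma_2(Z[1]))\to\pi_{j+1}(L\Gamma_2(C[1]))$ gives the natural section. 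You need an analogue of these constructions. Two further cautions on your proposed endgame: the polynomial $\mathbb Z$-module calculus of Section 2 classifies functors on \emph{free} abelian groups, on which $\Tor(-,\mathbb Z/2)$ vanishes, so it cannot detect the extension you isolate; and the candidate retraction $Lg$ fails outright, since the composite $S^2\to\Gamma_2\buildrel{g}\over\to S^2$ is multiplication by $2$ rather than the identity.
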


The following lemma follows from (Satz 12.1 \cite{DoldPuppe}).
\begin{lemma}\label{lem1}
Let $C\in \sf DAb_{\leq 0}$ be such that $H_0(C)=0,$ then one has
$\pi_1(S^2(C))=0.$ If $H_i(C)=0$ for $i\leq m\ (m\geq 1)$, then
$$
\pi_i(LS^2(C))=0,\ \ i\leq m+2
$$
\end{lemma}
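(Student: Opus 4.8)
The plan is to replace $C$ by a degreewise free model and then play the symmetric square off against the full tensor square, whose derived functor has a much larger connectivity gap. Concretely, I would first choose a degreewise free simplicial abelian group $\tilde C$ weakly equivalent to $C$; since $H_i(C)=0$ for $i\le m$, the object $\tilde C$ is $m$-connected. Because $\tilde C$ is degreewise free it is flat, so applying a functor degreewise computes its left derived functor: $S^2(\tilde C)$ computes $LS^2(C)$, and likewise $\Lambda^2(\tilde C),\Gamma_2(\tilde C),\tilde C\lotimes\tilde C$ compute $L\Lambda^2(C),L\Gamma_2(C),L(\otimes^2)(C)$. The one quantitative input I would use is the Künneth estimate for the tensor square: by Eilenberg--Zilber and the Künneth formula, an $m$-connected $\tilde C$ has $\pi_i(\tilde C\lotimes\tilde C)=0$ for $i\le 2m+1$, since the lowest tensor contributions sit in degree $2m+2$ and the lowest $\mathrm{Tor}$ contributions in degree $2m+3$.

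The second ingredient is purely formal. For free abelian groups the two classical exact sequences of polynomial functors
\[
0\to\Lambda^2(A)\to A\otimes A\to S^2(A)\to 0,\qquad 0\to\Gamma_2(A)\to A\otimes A\to\Lambda^2(A)\to 0
\]
(the antisymmetric, resp.\ symmetric, tensors inside $A\otimes A$) are exact with free terms, hence stay exact after being applied degreewise to $\tilde C$; this produces short exact sequences of simplicial abelian groups and thus long exact sequences in homotopy. I would also take for granted the standard fact that a reduced functor preserves connectivity (this is where Dold--Puppe, Satz~12.1 in \cite{DoldPuppe}, enters), so that $\Gamma_2(\tilde C)$, $\Lambda^2(\tilde C)$ and $S^2(\tilde C)$ are all at least $m$-connected.

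Now I would bootstrap in two steps. From the second sequence, the long exact sequence near degree $m+1$ contains $\pi_{m+1}(\tilde C\lotimes\tilde C)\to\pi_{m+1}(\Lambda^2\tilde C)\to\pi_m(\Gamma_2\tilde C)$, whose outer terms both vanish (the left by Künneth, as $m+1\le 2m+1$; the right by connectivity preservation); hence $\pi_{m+1}(\Lambda^2\tilde C)=0$ and $\Lambda^2\tilde C$ is $(m+1)$-connected. Feeding this into the first sequence, the portion $\pi_{m+2}(\tilde C\lotimes\tilde C)\to\pi_{m+2}(S^2\tilde C)\to\pi_{m+1}(\Lambda^2\tilde C)$ again has vanishing outer terms, now using $m+2\le 2m+1$, which is exactly where the hypothesis $m\ge 1$ is consumed; together with the analogous easier vanishing in degrees $\le m+1$ this gives $\pi_i(LS^2(C))=0$ for $i\le m+2$. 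The first assertion, $\pi_1(S^2(C))=0$ when $H_0(C)=0$, is the border case $m=0$ of the same computation: there $\tilde C$ is $0$-connected and the first sequence gives $\pi_1(\tilde C\lotimes\tilde C)\to\pi_1(S^2\tilde C)\to\pi_0(\Lambda^2\tilde C)$ with both outer groups zero, so $\pi_1(S^2(C))=0$; here one only gains a single degree, which is why the sharper $m+2$ bound is stated only for $m\ge 1$.

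I expect the only real obstacle to be the non-formal input behind connectivity preservation together with the identification of the degreewise functor with its derived functor; the rest is a two-line diagram chase. The delicate point is to make sure the model $\tilde C$ is genuinely degreewise free, so that the two exact sequences remain exact and $\tilde C\lotimes\tilde C$ is computed with no spurious $\mathrm{Tor}$, and to invoke the Dold--Puppe connectivity statement correctly at the base of the induction. Once the base vanishing $\pi_m(\Gamma_2\tilde C)=\pi_m(\Lambda^2\tilde C)=0$ is in hand, the gain of two degrees for $S^2$ is forced by the large connectivity $2m+1$ of $\tilde C\lotimes\tilde C$.
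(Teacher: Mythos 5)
Your argument is correct, and it is worth noting how it relates to what the paper actually does: the paper gives no argument at all, but simply remarks that the lemma ``follows from (Satz 12.1 \cite{DoldPuppe})'', i.e.\ it invokes the Dold--Puppe connectivity theorem for derived functors of polynomial functors at full strength (for a homogeneous quadratic functor and an $m$-connected input this already yields vanishing up to degree roughly $2m+1$, of which the stated $m+2$ is a weakening). You instead use only the soft part of that theorem --- that a reduced functor applied degreewise to an $m$-connected degreewise-free model stays $m$-connected --- and then recover the extra two degrees by hand, playing the two Koszul-type exact sequences $0\to\Lambda^2\to\otimes^2\to S^2\to 0$ and $0\to\Gamma_2\to\otimes^2\to\Lambda^2\to 0$ (exact on free abelian groups, hence degreewise on $\tilde C$) against the K\"unneth bound $\pi_i(\tilde C\lotimes\tilde C)=0$ for $i\le 2m+1$. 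The bookkeeping is right: the $\Gamma_2$-sequence promotes $\Lambda^2(\tilde C)$ to $(m+1)$-connected, the $\Lambda^2$-sequence then gives $\pi_{m+2}(S^2\tilde C)=0$ using $m+2\le 2m+1$ (this is exactly where $m\ge 1$ is needed), and the case $m=0$ correctly yields only $\pi_1=0$. Your route is more self-contained and makes the role of the hypothesis $m\ge 1$ transparent; the paper's citation is shorter and in fact gives a stronger connectivity bound, which is however not needed elsewhere in the argument. The only point to be careful about, which you flag yourself, is that the invariance of the degreewise construction under choice of degreewise-free model is itself a Dold--Puppe input, so your proof does not entirely avoid \cite{DoldPuppe}; it merely uses a weaker portion of it.
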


\begin{lemma}\label{lem2}
For every $C\in \sf DAb_{\leq 0},$ the suspension homomorphism
$$
\pi_1(LS^2(C))\to \pi_2(LS^2(C[1]))
$$
is the zero map.
\end{lemma}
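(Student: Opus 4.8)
The plan is to identify the suspension homomorphism with a boundary map that is forced to vanish, after using the connectivity estimate of Lemma \ref{lem1} to localise everything in the bottom homotopy. First I would record what the two groups are. Since $S^2$ is quadratic with vanishing linear part, the décalage (shift) formula for derived functors in the sense of \cite{DoldPuppe} gives a natural isomorphism $LS^2(C[1])\simeq L\Lambda^2(C)[2]$ in $\sf DAb_{\leq 0}$, whence $\pi_2(LS^2(C[1]))\simeq \pi_0(L\Lambda^2(C))=\Lambda^2(H_0(C))$; in particular the target depends only on $H_0(C)$. On the source side, Lemma \ref{lem1} shows that $\pi_1(LS^2(C))$ vanishes as soon as $H_0(C)=0$, so $\pi_1(LS^2(C))$ is controlled by $H_0(C)$ and $H_1(C)$: the cross-effect spectral sequence of \cite{DoldPuppe} produces a natural two-step filtration whose subquotients are $L_1S^2(H_0(C))$ and the degree-one cross-effect term $H_0(C)\otimes H_1(C)$ (recall the cross-effect of $S^2$ is $\otimes$).

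Next I would use the truncation $C\to H_0(C)[0]$. Applying $LS^2$ and the suspension $\sigma$ yields a commutative square; on the target $\Lambda^2(H_0(-))$ it is the identity, while on the source it carries $\pi_1(LS^2(C))$ onto $L_1S^2(H_0(C))$, annihilating the cross-effect summand $H_0(C)\otimes H_1(C)$. Hence $\sigma$ factors through $L_1S^2(H_0(C))$, and it suffices to treat the case $C=A[0]$ and to prove that the resulting natural transformation $L_1S^2(A)\to\Lambda^2(A)$ is the zero map.

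The heart of the argument is to recognise this residual map as a boundary operator. The short exact sequence of functors $0\to \Lambda^2(A)\to A\otimes A\to S^2(A)\to 0$ yields, after deriving, the triangle $L\Lambda^2(C)\to C\lotimes C\to LS^2(C)\to L\Lambda^2(C)[1]$, and under the décalage identification of the target its connecting map on $\pi_1$ is precisely $\sigma$. In the long exact sequence of this triangle the boundary $\pi_1(LS^2(C))\to \pi_0(L\Lambda^2(C))=\Lambda^2(H_0(C))$ sits immediately before the map $\Lambda^2(H_0(C))\to H_0(C)\otimes H_0(C)$ induced by the inclusion $\Lambda^2(A)\hookrightarrow A\otimes A$; since the latter is injective, the boundary has zero image, and therefore $\sigma=0$.

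The main obstacle I anticipate is the step of the preceding paragraph: verifying that the abstractly defined suspension homomorphism really agrees, up to sign, with the connecting map of the triangle coming from $0\to\Lambda^2\to\otimes^2\to S^2\to 0$. This is a chain-level compatibility between the Dold--Puppe suspension and the décalage isomorphism, and it is where care is needed. Should that identification prove awkward, a safe fallback is to argue directly that $\hom(L_1S^2,\Lambda^2)=0$ in the category of functors $\sf Ab\to Ab$, by evaluating on a two-step free resolution of $A$ and using that $L_1S^2$ is assembled from $\Tor$-groups, a computation of exactly the type carried out elsewhere in this paper.
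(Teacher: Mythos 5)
Your proposal is correct and follows essentially the same route as the paper: identify the target as $\Lambda^2(H_0(C))$, reduce to the natural map $L_1S^2(H_0(C))\to\Lambda^2(H_0(C))$, and kill it by observing that in the exact sequence coming from $0\to\Lambda^2\to\otimes^2\to S^2\to 0$ it is immediately followed by the injective map $\Lambda^2(H_0(C))\to H_0(C)\otimes H_0(C)$. Your explicit factorisation through the truncation $C\to H_0(C)[0]$ is a slightly more careful handling of the source than the paper's bare assertion $\pi_1(LS^2(C))\simeq L_1S^2(H_0(C))$, but the key step and the key injectivity fact are identical.
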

\begin{proof}
We have the following natural diagram
\begin{equation}\label{diagm}
\xyma{\pi_1(LS^2(C))\ar@{->}[r]^\simeq \ar@{->}[d]^{\text{susp}} &
L_1S^2(H_0(C))\ar@{->}[d] \\ \pi_2(LS^2(C[1])) \ar@{->}[r]^\simeq
& \Lambda^2(H_0(C)) } \end{equation} The right-hand vertical map
is zero by (Corollary 6.6, \cite{DoldPuppe}). Another way to see
why this map is trivial is to write the cross-effect spectral
sequence for $\pi_*(LS^2(C[1])$ from \cite{DoldPuppe}. The first
page of this spectral sequence implies that there is an exact
sequence
\begin{multline*}
0\to L_1\Lambda^2(H_0(C))\to Tor(H_0(C),H_0(C))\to L_1S^2(H_0(C))\to\\
\Lambda^2(H_0(C))\to H_0(C)\otimes H_0(C)\to S^2(H_0(C))\to 0
\end{multline*} where the middle map is the map from
(\ref{diagm}). It is zero map since the natural transformation
$\Lambda^2(H_0(C))\to H_0(C)\otimes H_0(C)$ is injective.
\end{proof}
\vspace{.5cm}\noindent{\it Proof of theorem \ref{the1}} The proof
is by induction on $i$. Lemma \ref{lem1} implies that there is a
natural isomorphism
$$
\pi_1(L\Gamma_2(C[1]))\simeq \pi_1\left(C\lotimes \mathbb
Z/2[1]\right)
$$
which is induced by the map $L\Gamma_2(C[1])\to C\lotimes \mathbb
Z/2[1]$ from (\ref{tri}).

Let us consider separately the case $i=2$. The assertion follows
from the suspension diagram
$$
\xyma{\pi_3\left(C\lotimes\mathbb Z/2[1]\right)\ar@{=}[d]
\ar@{->}[r] & \pi_2(LS^2(C[1]))\ar@{->}[r] &
\pi_2(L\Gamma_2(C[1]))
\ar@{->>}[r] & \pi_2\left(C\lotimes \mathbb Z/2[1]\right) \ar@{=}[d]\\
\pi_2\left(C\lotimes \mathbb Z/2\right) \ar@{->}[r] &
\pi_1(LS^2(C))\ar@{->}[u]^0 \ar@{->}[r] & \pi_1(L\Gamma_2(C))
\ar@{->}[u] \ar@{->>}[r] & \pi_1\left(C\lotimes \mathbb
Z/p\right)\ar@{->}[lu]}
$$
where the left hand vertical homomorphism is zero by lemma
\ref{lem2}.

Now assume by induction, that, for some $j\geq 2$ and for all
$i\geq j$, there are natural isomorphisms (\ref{iso1}), induced by
(\ref{tri}). Representing the object $C$ as $$\dots \to
C_i\buildrel{\partial_i}\over\to C_{i+1}\to \dots,$$ consider the
subcomplex $Z$ defined by
\begin{align*}
& Z_i=C_i,\ i\geq j-1,\\ & Z_{j-2}=im(\partial_{i-1}),\\
& Z_i=0,\ i<j-2
\end{align*}
The complex $Z$ has the following properties:\\ \\
1) the natural map $Z\to C$ induces isomorphisms
$$
\pi_i\left(Z\lotimes \mathbb Z/2\right)\simeq \pi_i\left(C\lotimes
\mathbb Z/2\right),\ i\geq j;
$$
2) $H_i(Z)=0,\ i\leq j-2.$

\vspace{.25cm}

Consider the natural diagram \begin{equation}\label{dia2}{\small
\xyma{\pi_{j+2}\left(C\lotimes \mathbb
Z/2[1]\right)\ar@{->}[r]\ar@{=}[d] &
\pi_{j+1}(LS^2(C[1]))\ar@{->}[r] & \pi_{j+1}(L\Gamma_2(C[1]))
\ar@{->>}[r] & \pi_{j+1}\left(C\lotimes \mathbb Z/2[1]\right)\\
\pi_{j+2}\left(Z\lotimes \mathbb Z/2[1]\right) \ar@{->}[r] &
\pi_{j+1}(LS^2(Z[1]))\ar@{->}[r] \ar@{->}[u]&
\pi_{j+1}(L\Gamma_2(Z[1])) \ar@{->>}[r]\ar@{->}[u] &
\pi_{j+1}\left(Z\lotimes \mathbb Z/2[1]\right)\ar@{=}[u]}}
\end{equation}
Lemma \ref{lem1} implies that $\pi_{j+1}(LS^2(Z[1]))=0$. The
required splitting now follows from diagram (\ref{dia2}). The
inductive step is complete so that the splitting (\ref{iso1})
proved for all $i$.\ $\Box$

\begin{prop}
The sequence
\begin{equation}\label{oooo}
LS^2(C[1])\to L\Gamma_2(C[1])\to C\lotimes \mathbb Z/2[1]
\end{equation}
does not split in the category $\sf DAb_{\leq 0}$.
\end{prop}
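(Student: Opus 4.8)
My plan is to read the statement as the assertion that the triangle (\ref{oooo}) admits no \emph{natural} splitting. For each individual $C$ the base ring $\mathbb Z$ is hereditary, so Dold's theorem (equivalently, formality of $\sf DAb_{\leq 0}$) already yields an isomorphism $L\Gamma_2(C[1])\simeq LS^2(C[1])\oplus(C\lotimes\mathbb Z/2[1])$, and Theorem \ref{the1} shows this isomorphism can be chosen compatibly on \emph{every} homotopy group, naturally in $C$. Hence the whole content of the proposition is that these pointwise data cannot be assembled into a natural section of the projection $L\Gamma_2(C[1])\to C\lotimes\mathbb Z/2[1]$; the obstruction is a secondary class living in the functor category $\mathrm{Fun}(\sf fAb,\sf Ab)$, which does not have global dimension $1$, and by Theorem \ref{the1} it is invisible on any single $\pi_i$.

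First I would suppose, for contradiction, that a natural section $s\colon C\lotimes\mathbb Z/2[1]\to L\Gamma_2(C[1])$ exists and track the obstruction it is meant to trivialise. Restricting to $C=A$ with $A\in\sf fAb$ and decomposing morphisms in the derived category of the functor category, the class of (\ref{oooo}) splits into a primary part, recorded by the natural transformations induced on the homotopy functors, and a secondary part. The primary part is exactly the data controlled by Theorem \ref{the1}, which forces each
\[
0\to\pi_i(LS^2(A[1]))\to\pi_i(L\Gamma_2(A[1]))\to\pi_i(A\lotimes\mathbb Z/2[1])\to0
\]
to split naturally; so the primary obstruction vanishes and the class is concentrated in the secondary (higher-$\mathrm{Ext}$) terms between $\pi_\ast(A\lotimes\mathbb Z/2[1])$ and $\pi_{\ast}(LS^2(A[1]))$. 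A natural $s$ would kill this secondary class, and the goal is to show it is nonzero.

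To compute it I would feed the triangle the torsion input $C=A\otimes\mathbb Z/2$, identify the three homotopy functors with the familiar quadratic functors ($\Lambda^2(A)\otimes\mathbb Z/2$, $\Gamma_2(A)\otimes\mathbb Z/2$, $S^2(A)\otimes\mathbb Z/2$, $A\otimes\mathbb Z/2$, and the relevant $\mathrm{Tor}$), and read off the quadratic/cubical $\mathbb Z$-module attached to $\pi_\ast L\Gamma_2((A\otimes\mathbb Z/2)[1])$ using the calculus of Section 2. I would then run the impossibility-of-commutative-diagram argument of Theorem \ref{free3s}: a natural section would produce a morphism of $\mathbb Z$-modules trivialising the $k$-invariant, which I expect to contradict the non-split classes already produced in Section 3, namely (\ref{sequ1}) and the generator (\ref{nonsplitting3}) of $\mathrm{Ext}_f(\Gamma_2\otimes\mathbb Z/2,\Lambda^3\otimes\mathbb Z/2)=\mathbb Z/2$. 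Matching the secondary class with one of these explicit non-split $\mathbb Z$-modules is what yields the contradiction.

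The hard part will be precisely this last matching, and it is delicate for a structural reason: the relation between (\ref{oooo}) and the unshifted triangle (\ref{tri}) is the décalage $C\mapsto C[1]$, whose suspension maps are \emph{not} isomorphisms; indeed Lemma \ref{lem2} shows the suspension $\pi_1(LS^2(C))\to\pi_2(LS^2(C[1]))$ is the zero map. One therefore cannot simply desuspend a splitting of (\ref{oooo}) to a splitting of (\ref{tri}) (which is already non-split, even pointwise). Because every homotopy group of (\ref{oooo}) splits naturally, the obstruction cannot be extracted from any single $\pi_i$: it must be isolated from the natural $k$-invariant and then shown, via the explicit $\mathbb Z$-module bookkeeping, to be the nontrivial element rather than a vanishing one. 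This identification is where all the content of the proof is concentrated.
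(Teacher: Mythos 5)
You have read the statement correctly (the issue is naturality in $C$; for a fixed free abelian group $C$ Dold's theorem does split the triangle), and you have correctly located the difficulty: by Theorem \ref{the1} every homotopy functor of the triangle splits naturally, so the obstruction is a genuinely derived-category phenomenon, a $k$-invariant living in a higher $\mathrm{Ext}$ of the functor category. But your proposal stops exactly where the proof has to begin: you never exhibit a functor of $C$, applied to the triangle, whose value actually distinguishes the split from the non-split situation. The plan of evaluating at $C=A\otimes\mathbb Z/2$ and reading off the quadratic/cubical $\mathbb Z$-modules of $\pi_*L\Gamma_2$ cannot work as stated, because those $\mathbb Z$-modules only record the homotopy functors and the first extensions between them ($Hom_f$ and $Ext_f^1$), and by Theorem \ref{the1} all of that data splits naturally; the $k$-invariant of a two-stage object with homotopy in degrees $1$ and $2$ lives in an $Ext^2$ from the bottom homotopy functor to the top one, which the calculus of Section 2 is not set up to compute and which you do not compute. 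The proposed matching with (\ref{sequ1}) and (\ref{nonsplitting3}) is a hope, not an argument, and you acknowledge as much by saying that ``all the content of the proof is concentrated'' in the step you leave open.

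The missing idea --- which is the entire content of the paper's proof --- is to apply a derived functor that mixes adjacent homotopy groups \emph{before} taking $\pi_*$, namely $-\lotimes\mathbb Z/2$. Take $C$ a free abelian group. If $L\Gamma_2(C[1])\simeq LS^2(C[1])\oplus C\lotimes\mathbb Z/2[1]$ naturally, then
$$
\pi_2\left(L\Gamma_2(C[1])\lotimes\mathbb Z/2\right)\simeq \Lambda^2(C)\otimes\mathbb Z/2\;\oplus\;C\otimes\mathbb Z/2,
$$
the second summand arising from $Tor(\pi_1(C\lotimes\mathbb Z/2[1]),\mathbb Z/2)$. On the other hand $L\Gamma_2(C[1])$ is represented by the explicit two-term complex $(C\otimes C\to\Gamma_2(C))[1]$, so this $\pi_2$ equals $\ker\{C\otimes C\otimes\mathbb Z/2\to\Gamma_2(C)\otimes\mathbb Z/2\}$, a subfunctor of $C\otimes C\otimes\mathbb Z/2$; since $Hom_f(-\otimes\mathbb Z/2,\otimes^2\otimes\mathbb Z/2)=0$ by the diagonal-injectivity argument of Section 2, it admits no natural direct summand $C\otimes\mathbb Z/2$. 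This short computation is precisely the detection mechanism your ``secondary class'' discussion lacks; without it, the proposal does not constitute a proof.
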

\begin{proof}
We will prove the statement for the simplest case, when $C$ is a
free abelian group. Suppose that $L\Gamma_2(C[1])\simeq
LS^2(C[1])\oplus C\lotimes \mathbb Z/2[1]$. Then
\begin{multline*}
\pi_2\left(L\Gamma_2(C[1])\lotimes \mathbb Z/2\right)\simeq
\pi_2\left(LS^2(C[1])\lotimes \mathbb Z/2\oplus C\lotimes\mathbb
Z/2\lotimes \mathbb Z/2[1]\right)\simeq\\ \Lambda^2(C)\otimes
\mathbb Z/2\oplus C\otimes \mathbb Z/2
\end{multline*}
However, $L\Gamma_2(C[1])$ can be represented by complex
$$
(C\otimes C\otimes \mathbb Z/2\to \Gamma_2(C)\otimes\mathbb
Z/2)[1]
$$
with the obvious map, and
$$
\pi_2\left(L\Gamma_2(C[1])\lotimes \mathbb
Z/2\right)=\ker\{C\otimes C\otimes \mathbb Z/2\to
\Gamma_2(C)\otimes\mathbb Z/2\}.
$$
However, there are no non-trivial natural transformation $C\otimes
\mathbb Z/2\to C\otimes C\otimes \mathbb Z/2$. This contradicts to
the splitting of (\ref{oooo}).
\end{proof}

\vspace{.5cm} \noindent{\it Acknowledgement.} The author thanks L.
Breen for various discussions related to the subject of this paper
and C. Vespa for important suggestions and corrections of some
computations.

\end{document}